\newtheorem{theorem}{Theorem}[section]
\newtheorem{lemma}[theorem]{Lemma}
\newtheorem{e-proposition}[theorem]{Proposition}
\newtheorem{e-definition}[theorem]{Definition\rm}
\newtheorem{remark}{\it Remark\/}
\newtheorem{example}{\it Example\/}
\def\og{\leavevmode\raise.3ex\hbox{$\scriptscriptstyle\langle\!\langle$~}}
\def\fg{\leavevmode\raise.3ex\hbox{~$\!\scriptscriptstyle\,\rangle\!\rangle$}}
\newcommand{\faded}{\color{gray}}
\newcommand{\pileupmath}[2][c]{\ensuremath{\begin{array}{@{}#1@{}}\strut#2\strut\end{array}}}
\newcommand{\pileuptext}[1]{\text{\begin{tabular}{@{}c@{}}#1\end{tabular}}}
\newcommand{\notapplicable}{\text{\tiny N.A.}}
\newlist{hypothenum}{enumerate}{3}
\setlist[hypothenum,1]{label=(\roman*)}
\newcommand{\eps}{\varepsilon}
\newcommand{\setsuch}[2]{\left\{ #1 \; \middle| \; #2 \right\}}
\newcommand{\transpose}[1]{#1^{\mathrm t}}
\newcommand{\ZZ}{\mathbb{Z}}
\newcommand{\CC}{\mathbb{C}}
\DeclareMathOperator{\rank}{rank}
\DeclareMathOperator{\Id}{Id}
\DeclareMathOperator{\Span}{Span}
\DeclareMathOperator{\SL}{SL}
\newcommand{\ie}{i.e.\ }
\newcommand{\eg}{e.g.\ }
\newcommand{\lie}{\mathfrak}
\newcommand{\abs}[1]{\lvert #1\rvert}
  \newcounter{topic@label}
  \renewcommand*{\thetopic@label}{topic@\the\value{topic@label}}
  \global\let\topic@previous\relax
  \global\let\lasttopic\relax
  \newcommand*{\topic}[1]{%
    \begingroup
      \def\topic@put{\topicformat{#1}}%
      \edef\topic@previouslabel{\thetopic@label}%
      \stepcounter{topic@label}%
      \zref@labelbyprops{\thetopic@label}{abspage}%
      \def\topic@current{#1}%
      \ifx\topic@current\topic@previous
        \zifrefundefined{\topic@previouslabel}{%
          \topic@put
        }{%
          \zifrefundefined{\thetopic@label}{%
            \topic@put
          }{%
            \ifnum\zref@extractdefault{\topic@previouslabel}{abspage}{0}=%
                  \zref@extractdefault{\thetopic@label}{abspage}\relax
            \else
              {\topic@put {\tiny { (cont.)}}}
            \fi
          }%
        }%
      \else
        \topic@put
      \fi
      \global\let\topic@previous\topic@current
    \endgroup
    \gdef\lasttopic{\topic{#1}}%
  }
  \newcommand*{\topicformat}[1]{#1}
\providecommand{\qedsymbol}{\square}
\providecommand{\qedhere}{\mbox{}\hfill$\qedsymbol$\begingroup\toks0\expandafter{\qedsymbol}\xdef\qedsymbol{\gdef\noexpand\qedsymbol{\the\toks0}}\endgroup}
\newenvironment{proof}[1][\unskip]{\trivlist\item\textbf{Proof #1. }\ignorespaces}{\mbox{}\hfill$\qedsymbol$\endtrivlist}
\journal{the Acad\'emie des sciences}
\begin{document}
\date{June 1, 2018}
\centerline{Lie Algebras}
\begin{frontmatter}

\selectlanguage{english}
\title{Action of Weyl group on zero-weight space}

\selectlanguage{english}

\author[bruno]{Bruno Le Floch},
\ead{blefloch@princeton.edu}
\author[ilia]{Ilia Smilga}
\ead{ilia.smilga@normalesup.org}
\ead[url]{http://gauss.math.yale.edu/\string~is362/index.html}

\address[bruno]{Princeton Center for Theoretical Science, Princeton NJ 08544, USA}
\address[ilia]{Yale University Mathematics Department, PO Box 208283, New Haven CT 06520-8283, USA}

\medskip
\begin{center}\small
  Received ?????; accepted after revision ?????\\
  Presented by ?????
\end{center}

\begin{abstract}
\selectlanguage{english}%
For any simple complex Lie group we classify irreducible finite-dimensional representations~$\rho$ for which the longest element~$w_0$ of the Weyl group acts nontrivially on the zero-weight space.  Among irreducible representations that have zero among their weights, $w_0$ acts by $\pm\Id$ if and only if the highest weight of~$\rho$ is a multiple of a fundamental weight, with a coefficient less than a bound that depends on the group and on the fundamental weight.
{\it To cite this article: B. Le~Floch, I. Smilga, C. R. Acad.\@ Sci.\@ Paris, Ser. I ??? (2018).}

\vskip 0.5\baselineskip

\selectlanguage{french}
\noindent{\bf R\'esum\'e} \vskip 0.5\baselineskip \noindent
{\bf Action du groupe de Weyl sur l'espace de poids nul. }
Pour tout groupe de Lie complexe simple nous classifions les repr\'esentations irr\'eductibles~$\rho$ de dimension finie telles que le plus long mot~$w_0$ du groupe de Weyl agisse non-trivialement sur l'espace de poids nul.  Parmi les repr\'esentations irr\'eductibles dont z\'ero est un poids, $w_0$ agit par $\pm\Id$ si et seulement si le plus haut poids de~$\rho$ est un multiple d'un poids fondamental, avec un coefficient plus petit qu'une borne qui d\'epend du groupe et du poids fondamental.
{\it Pour citer cet article~: B. Le~Floch, I. Smilga, C. R. Acad.\@ Sci.\@ Paris, Ser. I ??? (2018).}

\par
\end{abstract}
\par
\end{frontmatter}


\selectlanguage{english}
\section{\label{sec:intro}Introduction and main theorem}

Consider a reductive complex Lie algebra~$\lie{g}$. Let $\tilde{G}$ be the corresponding simply-connected Lie group.

We choose in~$\lie{g}$ a Cartan subalgebra $\lie{h}$.
Let $\Delta$~be the set of roots of~$\lie{g}$ in~$\lie{h}^*$. We call $\Lambda$ the root lattice, \ie the abelian subgroup of~$\lie{h}^*$ generated by~$\Delta$. We choose in~$\Delta$ a system $\Delta^+$ of positive roots; let $\Pi = \{\alpha_1, \ldots, \alpha_r\}$ be the set of simple roots in~$\Delta^+$. Let~$\varpi_1, \ldots, \varpi_r$ be the corresponding fundamental weights.
Let $W \coloneqq N_{\tilde{G}}(\lie{h})/Z_{\tilde{G}}(\lie{h})$ be the Weyl group, and let $w_0$ be its longest element (defined by $w_0(\Delta^+) = -\Delta^+$).

For each simple Lie algebra, we call $(e_1, e_2, \ldots)$ the vectors called $(\eps_1, \eps_2, \ldots)$ in the appendix to \cite{BouGAL456}, which form a convenient basis of a vector space containing $\lie{h}^*$. Throughout the paper, we use the Bourbaki conventions \cite{BouGAL456} for the numbering of simple roots and their expressions in the coordinates $e_i$.

In the sequel, all representations are supposed to be complex and finite-dimensional. We call $\rho_\lambda$ (resp.~$V_\lambda$) the irreducible representation of~$\lie{g}$ with highest weight~$\lambda$ (resp.\ the space on which it acts). Given a representation $(\rho, V)$ of~$\lie{g}$, we call $V^\lambda$ the weight subspace of~$V$ corresponding to the weight~$\lambda$.

\begin{e-definition}\label{def:radical}
  We say that a weight~$\lambda \in \lie{h}^*$ is \emph{radical} if $\lambda \in \Lambda$.
\end{e-definition}
\begin{remark}\label{rem:radical}
An irreducible representation $(\rho,V)$ has non-trivial zero-weight space~$V^0$ if and only if its highest weight is radical.
\end{remark}

\begin{e-definition}
Let $(\rho, V)$ be a representation of~$\lie{g}$. The action of $W = N_{\tilde{G}}(\lie{h})/Z_{\tilde{G}}(\lie{h})$ on~$V^0$ is well-defined, since $V^0$ is by definition fixed by $\lie{h}$, hence by~$Z_{\tilde{G}}(\lie{h})$. Thus $w_0$ induces a linear involution on~$V^0$. Let $p$ (resp.~$q$) be the dimension of the subspace of~$V^0$ fixed by~$w_0$ (resp.\ by~$-w_0$). We say that $(p, q)$ is the \emph{$w_0$-signature} of the representation~$\rho$ and that the representation is:
\begin{itemize}
\item \emph{$w_0$-pure} if $pq=0$ (\emph{of sign} $+1$ if $q=0$ and \emph{of sign} $-1$ if $p=0$);
\item \emph{$w_0$-mixed} if $pq>0$.
\end{itemize}
\end{e-definition}
\begin{remark}\label{rem:isogeny_does_not_matter}
Replacing $\tilde{G}$ by any other connected group~$G$ with Lie algebra~$\lie{g}$ (with a well-defined action on~$V$) does not change the definition. Indeed the center of~$\tilde{G}$ is contained in~$Z_{\tilde{G}}(\lie{h})$ so acts trivially on~$V^0$. 
\end{remark}
Our interest in this property originates in the study of free affine groups acting properly discontinuously (see \cite{Smi16b}). We prove the following complete classification. To the best of our knowledge, this specific question has not been studied before; see \cite{Hum14} for a survey of prior work on related, but distinct, questions about the action of the Weyl group on the zero-weight space.

\begin{theorem}
\label{main_theorem}
Let $\lie{g}$ be any simple complex Lie algebra; let $r$~be its rank.
For every index $1\leq i\leq r$, we denote by~$p_i$ the smallest positive integer such that $p_i \varpi_i \in \Lambda$.
For every such $i$, let the ``maximal value'' $m_i \in \ZZ_{\geq 0} \cup \{\infty\}$ and the ``sign'' $\sigma_i \in \{\pm1\}$ be as given in Table~\ref{mui-ni-table} on page~\pageref{mui-ni-table}.

Let $\lambda$ be a dominant weight.
\begin{hypothenum}
\item\label{main nonradical} If $\lambda \not\in \Lambda$, then the $w_0$-signature of the representation~$\rho_\lambda$ is $(0,0)$.
\item\label{main pure radical} If $\lambda = k p_i \varpi_i$ for some $1\leq i\leq r$ and $0\leq k \leq m_i$, then $\rho_\lambda$ is $w_0$-pure of sign $(\sigma_i)^k$.
\item\label{main mixed} Finally, if $\lambda \in \Lambda$ but is not of the form $\lambda = k p_i \varpi_i$ for any $1\leq i\leq r$ and $0\leq k\leq m_i$, then $\rho_\lambda$ is $w_0$-mixed.
\end{hypothenum}
\end{theorem}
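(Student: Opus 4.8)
The plan is to dispatch \ref{main nonradical} immediately from Remark~\ref{rem:radical} (if $\lambda\notin\Lambda$ then $V^0_\lambda=0$, so $(p,q)=(0,0)$), and to treat \ref{main pure radical} and \ref{main mixed} through the single numerical invariant $\tr(\dot w_0\mid V^0_\lambda)$, where $\dot w_0\in N_{\tilde G}(\lie h)$ is any representative. Since $\dot{w}_0^2\in Z_{\tilde G}(\lie h)$ acts trivially on $V^0_\lambda$, the restriction $\restr{\dot w_0}{V^0_\lambda}$ is a well-defined involution with $p-q=\tr(\dot w_0\mid V^0_\lambda)$ and $p+q=\dim V^0_\lambda$. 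Thus $\rho_\lambda$ is $w_0$-pure of sign $\pm1$ exactly when $\tr(\dot w_0\mid V^0_\lambda)=\pm\dim V^0_\lambda$, and $w_0$-mixed exactly when $\lvert\tr(\dot w_0\mid V^0_\lambda)\rvert<\dim V^0_\lambda$. Everything therefore reduces to comparing one trace with one dimension.

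To compute the trace, observe that $\dot w_0$ maps $V^\mu_\lambda$ onto $V^{w_0\mu}_\lambda$, so only weights fixed by $w_0$ contribute to $\tr(\dot w_0\mid V_\lambda)$. When $-1\in W$, that is $w_0=-1$ (all types except $A_{n\ge2}$, $D_{2n+1}$ and $E_6$), this forces $\mu=0$ and hence $\tr(\dot w_0\mid V^0_\lambda)=\tr(\dot w_0\mid V_\lambda)$, a full twisted character that I would evaluate on the coset $\dot w_0 T$ by a Weyl-character-formula argument (equivalently as $\lim_{H\to0}\tr(\dot w_0\exp H\mid V_\lambda)$). I expect this to yield that, whenever $V^0_\lambda$ is pure, its common sign equals $(-1)^{\mathrm{ht}\,\lambda}=(-1)^{\langle\lambda,\check\rho\rangle}$, where $\mathrm{ht}\,\lambda$ is the sum of the coordinates of $\lambda$ in the basis of simple roots; this is consistent with the base cases $A_1$ (sign $(-1)^k$ on the line $V^0_{2k\varpi_1}$, whence $m_1=\infty$) and with the adjoint representation (sign $-1$ on $V^0=\lie h$). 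Because $\mathrm{ht}$ is linear, the sign along the ray $\lambda=kp_i\varpi_i$ is $(\sigma_i)^k$ with $\sigma_i=(-1)^{\mathrm{ht}(p_i\varpi_i)}$, which is exactly the shape of \ref{main pure radical}. For the three remaining types one has $w_0=-\iota$ with $\iota$ a nontrivial diagram automorphism, the weights with $\iota\mu=-\mu$ now also contribute, and I would isolate the $\mu=0$ term by first computing $\tr(\dot w_0\exp H\mid V_\lambda)$ for $H$ in the $(+1)$-eigenspace of $w_0$ on $\lie h$ and only then letting $H\to0$.

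With the sign settled on the pure locus, parts \ref{main pure radical} and \ref{main mixed} become the two halves of a single threshold statement: the zero-weight space $V^0_{kp_i\varpi_i}$ is uniformly signed for $0\le k\le m_i$ and carries both signs for $k>m_i$, and every radical $\lambda$ off these rays is $w_0$-mixed. For the mixed direction I would exhibit two zero-weight eigenvectors of opposite sign: when $\lambda$ has at least two nonzero fundamental coordinates, by restricting to a rank-two subsystem on which the two signs are visible by the computation above; when $\lambda=kp_i\varpi_i$ with $k>m_i$, as the sharpness half of the threshold. To locate $m_i$ itself I would compare the explicit growth of $\dim V^0_{kp_i\varpi_i}$ in $k$ (via Kostant's multiplicity formula) against the twisted trace, the threshold being the first $k$ at which the two cease to agree in absolute value.

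The main obstacle is precisely this sharp determination of the pairs $(m_i,\sigma_i)$ recorded in Table~\ref{mui-ni-table}, together with the uniform treatment of the mixed case: proving that the sign is genuinely constant on $V^0_{kp_i\varpi_i}$ up to $k=m_i$ and genuinely non-constant at $k=m_i+1$ requires descending to the weight combinatorics in the Bourbaki coordinates $e_i$, and the diagram-automorphism types $A_{n\ge2}$, $D_{2n+1}$, $E_6$ as well as the exceptional algebras each demand a separate verification. I anticipate this case-by-case analysis, rather than the conceptual reduction above, to be the bulk of the work.
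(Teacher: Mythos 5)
Your reduction of the problem to comparing $\tr(\dot w_0 \mid V^0_\lambda)$ with $\dim V^0_\lambda$, and the observation that only $w_0$-fixed weights contribute to that trace, are correct; your predicted sign $(-1)^{\mathrm{ht}\,\lambda}$ does in fact agree with every $\sigma_i$ in Table~\ref{mui-ni-table}. But the proposal stops exactly where the theorem begins: neither of its two quantitative claims is ever established. The twisted-character evaluation is only ``expected'', and your sole proposed method for locating $m_i$ --- comparing the growth of $\dim V^0_{kp_i\varpi_i}$ (via Kostant's formula) against the twisted trace, ray by ray and $k$ by $k$ --- is the problem restated, not a technique; on the rays with $m_i=\infty$ it would require an identity uniform in $k$ that you have no mechanism to produce (the paper instead realizes those representations inside explicit tensor powers $S^m\square$, $\ext^m\square$, $S^2(\ext^2\square)$ whose zero-weight spaces can be written down and acted on by hand). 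The pivotal structural input, absent from your plan, is Lemma~\ref{lem:ideal}: by Vinberg's theorem on $\CC[G/N]$, the Cartan product of two nonzero vectors is nonzero, so the highest weights of $w_0$-mixed representations form an ideal of the monoid of dominant radical weights. This single fact is what makes the ``pure up to a threshold, mixed beyond and off the rays'' shape of the statement automatic, and reduces part~\ref{main mixed} to finitely many explicit checks plus a rank induction; without it, your threshold claim is an assertion about infinitely many representations with no propagation mechanism from one $k$ to the next.

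There is also a concrete error in your plan for the mixed case. Exhibiting opposite signs ``by restricting to a rank-two subsystem'' fails as stated, for two reasons the paper goes to some lengths to circumvent. First, a proper rank-two root subsystem does not contain the Cartan subalgebra $\lie{h}$ of $\lie{g}$ (unless $r=2$), so the zero-weight space of the restricted representation is strictly larger than $V^0_\lambda$, and eigenvectors produced there need not lie in $V^0_\lambda$ at all. Second, the longest element of the subsystem's Weyl group has no a priori relation to $w_0$ of $\lie{g}$, so a sign computed in the small group says nothing about the action of $w_0$ itself. Proposition~\ref{sl2_copies} (strongly orthogonal roots spanning $(\lie{h}^*)^{-w_0}$, giving $\lie{s}\simeq\lie{sl}(2,\CC)^s\times\CC^t$ whose $w_0$ has a common representative with that of $G$) and the full-rank subalgebras $\lie{f}\times\lie{g}'$ of Section~\ref{sec:inductionrule} (which share both the Cartan subalgebra and a $w_0$ representative with $\lie{g}$) are engineered precisely to repair these two defects; establishing those compatibilities, together with the branching analysis of Lemmas~\ref{lem:define-mu} and~\ref{lem:nu-exists}, is where the actual work of the paper lies, and nothing in your proposal substitutes for it.
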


\begin{example}\label{example:sl2}
  Any irreducible representation of $\SL(2,\CC)$ is isomorphic to $S^k \CC^2$ (the $k$-th symmetric power of the standard representation) for some $k\in\ZZ_{\geq 0}$.  Its $w_0$-signature is $(0,0)$ if $k$ is odd, $(1,0)$ if $k$ is divisible by $4$ and $(0,1)$ if $k$ is $2$ modulo $4$.  This confirms the $A_1$ entries $(p_1,m_1,\sigma_1)=(2,\infty,-1)$ of Table~\ref{mui-ni-table}.
\end{example}

Table~\ref{mui-ni-table} also gives the values of~$p_i$. These are not a new result: they can be found immediately from the congruence relations satisfied by the root lattice (given in Table~\ref{root_lattice_congruences} in the appendix), which are ultimately easy to compute from the known descriptions of the simple roots and fundamental weights (given \eg in \cite{BouGAL456}).

Point~\ref{main nonradical} is an immediate consequence of Remark~\ref{rem:radical}.

For point~\ref{main pure radical}, we show in Section~\ref{sec:proof pure} that certain symmetric and antisymmetric powers of defining representations of classical groups are $w_0$-pure, and that almost all representations listed in point~\ref{main pure radical} are sub-representations of these powers.  The finitely many exceptions are treated by an algorithm described in Section~\ref{sec:algo}.

\begin{table}
  \caption{\label{mui-ni-table}Values of $(p_i,m_i,\sigma_i)$ for simple Lie algebras. Theorem~\ref{main_theorem} states that among irreducible representations with a highest weight~$\lambda$ that is radical, only those with $\lambda$ of the form $k p_i \varpi_i$ with $k \leq m_i$ are $w_0$-pure, with a sign given by $\sigma_i^k$.
    We write $\notapplicable$ for $\sigma_i$~sign entries that are not defined due to $m_i=0$.
    Since $A_1\simeq B_1\simeq C_1$ and $B_2\simeq C_2$ and $A_3\simeq D_3$, the results match up to reordering simple roots (namely reordering $i=1,\ldots,r$).
  }
  \centering\bigskip
  \begin{tabular}[t]{llr>{\hspace{1em}}c>{\hspace{1em}}c>{\hspace{1em}}c}
    \toprule
    & \multicolumn{2}{l}{Values of $i$ and $r$} & $p_i$ & $m_i$ & $\sigma_i$ \\
    \midrule
    \multirow{3}{3em}{$A_{r\geq 1}$}
    & $i=1$ or $r$             &       & $r+1$ & $\infty$ & $(-1)^{\lfloor(r+1)/2\rfloor}$ \\
    \cmidrule{2-6}
    & $1<i<r$ & \pileupmath{r=3\\r>3} & $\frac{r+1}{\gcd(i,r+1)}$ & \pileupmath{\infty\\0} & \pileupmath{+1\\\notapplicable} \\
    \midrule
    \multirow{6}{3em}[-1.5ex]{$B_{r\geq 1}$}
    & $i=1$                  & $r>1$ & $1$ & $\infty$ & \multirow{6}{*}[-1.5ex]{$(-1)^{ri-\lfloor i/2\rfloor}$} \\
    \cmidrule{2-5}
    & $i=2$ & $r>2$ & $1$ & $2$ & \\
    \cmidrule{2-5}
    & $2<i<r$ & & $1$ & $1$ & \\
    \cmidrule{2-5}
    & \multirow{2}{*}{$i=r$} & $r=1,2$ & \multirow{2}{*}{$2$} & $\infty$ & \\
    & & $r>2$ & & $1$ & \\
    \midrule
    \multirow{6}{3em}[-1.5ex]{$C_{r\geq 1}$}
    & $i=1$ & & $2$ & $\infty$ & $-1$ \\
    \cmidrule{2-6}
    & $i=2$ & \pileupmath{r=2\\r>2} & $1$ & \pileupmath{\infty\\2} & $+1$ \\
    \cmidrule{2-6}
    & $i$ odd $>2$ & \pileupmath[r]{i=r=3\\r>3} & $2$ & \pileupmath{1\\0} & \pileupmath{-1\\\notapplicable} \\
    \cmidrule{2-6}
    & $i$ even $>2$ & \pileupmath[r]{i=r=4\\r>4} & $1$ & \pileupmath{2\\1} & $+1$ \\
    \midrule
    \multirow{5}{3em}[-1.5ex]{$D_{r\geq 3}$\\$r$ odd}
    & $i=1$ & & $2$ & $\infty$ & $+1$ \\
    \cmidrule{2-6}
    & $1<i<r-1$ & \pileuptext{$i$ even\\$i$ odd} & \pileupmath{1\\2} & $0$ & \notapplicable \\
    \cmidrule{2-6}
    & $i=r-1$ or $r$ & \pileupmath{r=3\\r>3} & $4$ & \pileupmath{\infty\\0} & \pileupmath{+1\\\notapplicable} \\
    \midrule
    \multirow{5}{3em}[-1.5ex]{$D_{r\geq 4}$\\$r$ even}
    & $i=1$ & & $2$ & $\infty$ & $+1$ \\
    \cmidrule{2-6}
    & $i=2$ & & $1$ & $2$ & $-1$ \\
    \cmidrule{2-6}
    & $2<i<r-1$ & \pileuptext{$i$ odd\\$i$ even} & \pileupmath{2\\1} & \pileupmath{0\\1} & \pileupmath{\notapplicable\\(-1)^{i/2}} \\
    \cmidrule{2-6}
    & $i=r-1$ or $r$ & \pileupmath{r=4\\r>4} & $2$ & \pileupmath{\infty\\1} & $(-1)^{r/2}$ \\
    \bottomrule
  \end{tabular}
  \hspace{.04\linewidth}%
  \begin{tabular}[t]{l>{\hspace{1em}}>$l<$>{\hspace{1em}}>$c<$>{\hspace{1em}}>$c<$>{\hspace{1em}}>$c<$}
    \toprule
    & \text{Values of }i & p_i & m_i & \sigma_i \\
    \midrule
    \multirow{2}{*}{$E_6$} & i=1,3,5,6 & 3 & 0 & \notapplicable \\
    & i=2,4 & 1 & 0 & \notapplicable \\
    \midrule
    \multirow{5}{*}{$E_7$}
    & i=1 & 1 & 2 & -1 \\
    & i=2,5 & 2 & 0 & \notapplicable \\
    & i=3,4 & 1 & 0 & \notapplicable \\
    & i=6 & 1 & 1 & +1 \\
    & i=7 & 2 & 1 & -1 \\
    \midrule
    \multirow{3}{*}{$E_8$}
    & i=1 & 1 & 1 & +1 \\
    & 1<i<8 & 1 & 0 & \notapplicable \\
    & i=8 & 1 & 2 & -1 \\
    \midrule
    \multirow{3}{*}{$F_4$}
    & i=1 & 1 & 2 & -1 \\
    & i=2,3 & 1 & 0 & \notapplicable \\
    & i=4 & 1 & 2 & +1 \\
    \midrule
    $G_2$
    & i=1,2 & 1 & 2 & -1 \\
    \bottomrule
  \end{tabular}
\end{table}

For point~\ref{main mixed} we prove in Section~\ref{sec:ideal} that the set of highest weights of $w_0$-mixed representations of a given group is an ideal of the monoid of dominant radical weights. For any fixed group, this reduces the problem to checking $w_0$-mixedness of finitely many representations. In Section~\ref{sec:inductionrule}, we immediately conclude for exceptional groups and for low-rank classical groups by the algorithm of Section~\ref{sec:algo}; we proceed by induction on rank for the remaining classical groups.

\section{\label{sec:algo}An algorithm to compute explicitly the $w_0$-signature of a given representation}

\begin{e-proposition}\label{sl2_copies}
  Any simple complex Lie group~$G$ admits a reductive subgroup~$S$ whose Lie algebra is isomorphic to $\lie{sl}(2,\CC)^s \times \CC^t$, where $(t, s)$ is the $w_0$-signature of the adjoint representation of~$G$, and whose $w_0$ element is compatible with that of~$G$, in the sense that some representative of the $w_0$ element of~$S$ is a representative of the $w_0$ element of~$G$. This subgroup~$S$ can be explicitly described.
\end{e-proposition}
Note that $s+t = r$ (the rank of~$G$) and that $t = 0$ except for $A_n$ ($t = \lfloor \frac{n}{2} \rfloor$), $D_{2n+1}$ ($t = 1$) and $E_6$ ($t = 2$).

\begin{proof}
  Let $(\lie{h}^*)^{-w_0}$ be the $-1$ eigenspace of~$w_0$.
  Recall that two roots $\alpha$ and~$\beta$ are called \emph{strongly orthogonal} if $\langle \alpha, \beta \rangle = 0$ and neither $\alpha + \beta$ nor $\alpha - \beta$ is a root.
  Table~\ref{tab:orthoroots} exhibits pairwise strongly orthogonal roots $\{\alpha_1, \ldots, \alpha_s\} \subset \Delta$ spanning $(\lie{h}^*)^{-w_0}$ as a vector space.
  (Our sets are conjugate to those of \cite{AK84} but these authors did not need the elements~$w_0$ to match.)
  We then set
  \[\lie{s} \coloneqq \lie{h} \oplus \bigoplus_{i=1}^{s} \left( \lie{g}^{\alpha_i} \oplus \lie{g}^{-\alpha_i} \right),\]
  where $\lie{g}^\alpha$ denotes the root space corresponding to~$\alpha$. This is a Lie subalgebra of~$\lie{g}$, as follows from $[\lie{g}^\alpha, \lie{g}^\beta] \subset \lie{g}^{\alpha+\beta}$ and from strong orthogonality of the $\alpha_i$. It is isomorphic to $\lie{sl}(2,\CC)^s \times \CC^t$, because it has Cartan subalgebra~$\lie{h}$ of dimension $r = s+t$ and a root system of type $A_1^s$. We define $S$ to be the connected subgroup of~$G$ with algebra~$\lie{s}$.

  Let $\overline{\sigma_i}\coloneqq\exp[\tfrac{\pi}{2}(X_{\alpha_i}-Y_{\alpha_i})]\in S$, where for every $\alpha$, $X_\alpha$ and $Y_\alpha$ denote the elements of~$\lie{g}$ introduced in \cite[Theorem 7.19]{Hall15}.
  We claim that $\overline{\sigma}\coloneqq\prod_i\overline{\sigma_i}$ is a representative of the $w_0$ element of~$S$ and of the $w_0$ element of~$G$.
  By \cite[Proposition 11.35]{Hall15}, $\overline{\sigma_i}$ is a representative of the reflection $s_{\alpha_i}$, which shows the first statement.
  Now since the $\alpha_i$ are orthogonal, the product of $s_{\alpha_i}$ acts by $-\Id$ on their span $(\lie{h}^*)^{-w_0}$ and acts trivially on its orthogonal complement, like $w_0$.
  \begin{table}[b]
    \caption{\label{tab:orthoroots}Sets of strongly orthogonal roots that span the vector space $(\lie{h}^*)^{-w_0}$.  We chose them among the positive roots.}
    \centering\bigskip
    \begin{tabular}{c}
      \toprule
      \begin{tabular}[t]{l<{:}l}
        $A_n$
        & $\setsuch{e_i - e_{n+2-i}}{1 \leq i \leq \lfloor (n+1)/2 \rfloor}$ \\
        $B_{2n}$
        & $\setsuch{e_{2i-1} \pm e_{2i}}{1 \leq i \leq n}$ \\
        $B_{2n+1}$
        & $\setsuch{e_{2i-1} \pm e_{2i}}{1 \leq i \leq n} \cup \{e_{2n+1}\}$ \\
        $C_n$
        & $\setsuch{2e_i}{1 \leq i \leq n}$ \\
        $D_n$
        & $\setsuch{e_{2i-1} \pm e_{2i}}{1 \leq i \leq\lfloor n/2\rfloor}$ \\
      \end{tabular}\hspace{3em}
      \begin{tabular}[t]{l<{:}l}
        $E_6$
        & $\begin{array}[t]{@{}l@{}}\{- e_1 + e_4,\; - e_2 + e_3,\\\quad\pm\frac{1}{2}(e_1 + e_2 + e_3 + e_4) + \frac{1}{2}(e_5 - e_6 - e_7 + e_8)\}\end{array}$
        \\
        $E_7$
        & $\{\pm e_1 + e_2,\; \pm e_3 + e_4,\; \pm e_5 + e_6,\; - e_7 + e_8\}$ \\
        $E_8$
        & $\{\pm e_1 + e_2,\; \pm e_3 + e_4,\; \pm e_5 + e_6,\; \pm e_7 + e_8\}$ \\
        $F_4$
        & $\{e_1 \pm e_2,\; e_3 \pm e_4\}$ \\
        $G_2$
        & $\{e_1 - e_2,\; - e_1 - e_2 + 2e_3\}$ \\
      \end{tabular}
      \\\bottomrule
    \end{tabular}
  \end{table}
\end{proof} 

Then the $w_0$-signature of any representation~$\rho$ of $G$ is equal to that of its restriction~$\rho|_S$ to $S$.  We use branching rules to decompose $\rho|_S=\oplus_i\rho_i$ into irreducible representations of~$S$.  The total $w_0$-signature is then the sum of those of the~$\rho_i$.  Each $\rho_i$ is a tensor product $\rho_{i,1}\otimes\dots\otimes\rho_{i,s}\otimes\rho_{i, \operatorname{Ab}}$, where $\rho_{i,j}$ for $1 \leq j \leq s$ is an irreducible representation of the factor $\lie{s}_j\simeq\lie{sl}(2,\CC)$, and $\rho_{i, \operatorname{Ab}}$ is an irreducible representation of the abelian factor isomorphic to~$\CC^t$.  The $w_0$-signature of~$\rho_i$ is then the ``product'' of those of these factors, according to the rule $(p, q)\otimes(p', q') = (pp' + qq', pq' + qp')$. The $w_0$-signatures of all irreducible representations of~$\lie{sl}(2,\CC)$ have been described in Example~\ref{example:sl2}; the $w_0$-signature of $\rho_{i, \operatorname{Ab}}$ is just $(1, 0)$ if the representation is trivial and $(0, 0)$ otherwise.

Branching rules are provided by several software packages.  We implemented our algorithm separately in LiE \cite{LiE} and in Sage \cite{SageMath}. In Sage, we used the Branching Rules module \cite{SagBR}, largely written by Daniel Bump. The LiE code is given in Appendix~\ref{sec:lie_code}.

\section{\label{sec:proof pure}Proof of~\ref{main pure radical}: that some representations are $w_0$-pure}

We must prove that representations of highest weight $\lambda=kp_i\varpi_i$, $k\leq m_i$ are $w_0$-pure of sign $\sigma_i^k$ (with data $p_i$, $m_i$, $\sigma_i$ given in Table~\ref{mui-ni-table}).
We denote by~$\square$ the defining representation of each classical group ($\CC^{n+1}$~for~$A_n$, $\CC^{2n+1}$~for~$B_n$, $\CC^{2n}$~for $C_n$ and~$D_n$),
and introduce a basis of it:
for every $\eps \in \{-1, 0, 1\}$ and $i$ such that $\eps e_i$ (or for $A_n$ its orthogonal projection onto $\lie{h}^*$) is a weight of~$\square$, we call $h_{\eps i}$ some nonzero vector in the corresponding weight space.

For exceptional groups, all $m_i$ are finite so the algorithm of Section~\ref{sec:algo} suffices. We also use it for the representations with highest weight $2\varpi_3$ of $C_3$ and $2\varpi_4$ of~$C_4$. The results of these computations are given in the appendix, in Table~\ref{pure_by_hand}.

Most other cases are subrepresentations of $S^m\square$ of $A_n$ or $D_{2n+1}$, or one of $S^m\square$ or $\Lambda^m\square$ or $S^2(\Lambda^2\square)$ of $B_n$ or $C_n$ or $D_{2n}$, all of which will prove to be $w_0$-pure.
Here $S^m \rho$ and $\Lambda^m \rho$ denote the symmetric and the antisymmetric tensor powers of a representation~$\rho$.
The remaining cases are mapped to these by the isomorphisms $B_2\simeq C_2$ and $A_3\simeq D_3$ and the outer automorphisms $\ZZ/2\ZZ$ of $A_n$ and $\mathfrak{S}_3$ of~$D_4$.

For $A_n=\lie{sl}(n+1,\CC)$ the defining representation is $\square=\CC^{n+1}=\Span\{h_1,\dots,h_{n+1}\}$.
A representative $\overline{w_0}\in \SL(n+1,\CC)$ of $w_0$ acts on $\square$ by $h_j\mapsto h_{n+2-j}$ for $1\leq j<n+1$ and by $h_{n+1}\mapsto \sigma_1 h_1$ where $\sigma_1=(-1)^{\lfloor (n+1)/2\rfloor}$,
the sign being such that $\det\overline{w_0}=+1$.
We consider the representation $S^{k(n+1)}\square$.  Its zero-weight space~$V^0$ is spanned by symmetrized tensor products $h_{j_1}\otimes\dots\otimes h_{j_{k(n+1)}}$ in which each $h_j$ appears equally many times, namely~$k$ times.  Hence, $V^0$ is one-dimensional (the representation is thus $w_0$-pure) and spanned by the symmetrization of $v=h_1^{\otimes k}\otimes h_2^{\otimes k}\otimes\dots\otimes h_{n+1}^{\otimes k}$.  We compute $\overline{w_0}\cdot v = h_{n+1}^{\otimes k}\otimes\dots\otimes h_2^{\otimes k}\otimes(\sigma_1 h_1)^{\otimes k}$, whose symmetrization is equal to $\sigma_1^k$ times that of~$v$; this gives the announced sign $\sigma_1^k$.

For $D_{2n+1}=\lie{so}(4n+2,\CC)$ the defining representation is $\square=\CC^{4n+2}=\Span\{h_{\pm j}\mid 1\leq j\leq 2n+1\}$ and $\overline{w_0}$ maps $h_{\pm j}\mapsto h_{\mp j}$ for $1\leq j\leq 2n$ but fixes~$h_{\pm(2n+1)}$.  The zero-weight space $V^0$ of $S^{2k}\square$ is spanned by symmetrizations of $h_{j_1}\otimes h_{-j_1}\otimes\dots\otimes h_{j_k}\otimes h_{-j_k}$, each of which is fixed by~$\overline{w_0}$.  The representation is $w_0$-pure with $\sigma_1=+1$ as announced.

The cases of $B_n=\lie{so}(2n+1,\CC)$, $C_n=\lie{sp}(2n,\CC)$ and $D_{n\text{ even}}=\lie{so}(2n,\CC)$ are treated together.
\begin{itemize}
\item $B_n$ has $\square=\CC^{2n+1}=\Span\{h_j\mid -n\leq j\leq n\}$ and $\overline{w_0}$ acts by $h_j\mapsto h_{-j}$ for $j\neq 0$ and $h_0\mapsto (-1)^n h_0$.
\item $C_n$ has $\square=\CC^{2n}=\Span\{h_{\pm j}\mid 1\leq j\leq n\}$ and $\overline{w_0}$ acts by $h_j\mapsto h_{-j}$ and $h_{-j}\mapsto -h_j$ for $j>0$.
\item $D_n$ has $\square=\CC^{2n}=\Span\{h_{\pm j}\mid 1\leq j\leq n\}$ and, for $n$~even, $\overline{w_0}$ acts by $h_j\mapsto h_{-j}$ for all~$j$.
\end{itemize}
First consider $\Lambda^m\square$ and $S^m\square$.
Their zero-weight spaces are spanned by (anti)symmetrizations of $h_{j_1}\otimes h_{-j_1}\otimes\dots\otimes h_{j_k}\otimes h_{-j_k}\otimes h_0^{\otimes l}$, where $2k+l=m$.  Each of these vectors is fixed by $\overline{w_0}$ up to a sign that only depends on the group, the representation, and on $(k,l)$ or equivalently $(l,m)$.  For $C_n$ and $D_n$ we have $l=0$ so for each $m$ the representation is $w_0$-pure, with a sign $(-1)^k$ for $S^{2k}\square$ of $C_n$ and $\Lambda^{2k}\square$ of $D_n$, and no sign otherwise.  For $\Lambda^m\square$ of~$B_n$ we note that $l\in\{0,1\}$ is fixed by the parity of~$m$ so the representation is $w_0$-pure; its sign is $(-1)^{nl+k}=(-1)^{nm+\lfloor m/2\rfloor}=\sigma_m$.  For $S^m\square$ of~$B_n$ only the parity of~$l$ is fixed but the sign $(-1)^{nl}=(-1)^{nm}=\sigma_1^m$ still only depends on the representation; it confirms the data of Table~\ref{mui-ni-table}.
Finally consider the representation $S^2(\Lambda^2\square)$.  Its zero-weight space is spanned by symmetrizations of $(h_j\wedge h_{-j})\otimes(h_k\wedge h_{-k})$ and $(h_j\wedge h_k)\otimes(h_{-j}\wedge h_{-k})$ all of which are fixed by $\overline{w_0}$.

\section{\label{sec:ideal}Cartan product: $w_0$-mixed representations form an ideal}
Let $G$ be a simply-connected simple complex Lie group and $N$ a maximal unipotent subgroup of~$G$.
Define $\CC[G/N]$ the space of regular (\ie polynomial) functions on $G/N$.
Pointwise multiplication of functions is $G$-equivariant and makes $\CC[G/N]$ into a $\CC$-algebra without zero divisors (because $G/N$ is irreducible as an algebraic variety).

\begin{theorem}[{\cite[(3.20)--(3.21)]{VinPopBook}}]\label{thm:Vinberg}
  Each finite-dimensional representation of~$G$ (or equivalently of its Lie algebra~$\lie{g}$) occurs exactly once as a direct summand of the representation $\CC[G/N]$.
  The $\CC$-algebra $\CC[G/N]$ is graded in two ways:
  \begin{itemize}
  \item by the highest weight $\lambda$, in the sense that the product of a vector in $V_\lambda$ by a vector in $V_\mu$ lies in $V_{\lambda+\mu}$ (where $V_\lambda$ stands here for the subrepresentation of $\CC[G/N]$ with highest weight $\lambda$);
  \item by the actual weight $\lambda$, in the sense that the product of a weight vector with weight $\lambda$ by a weight vector with weight $\mu$ is still a weight vector, with weight $\lambda+\mu$.
  \end{itemize}
\end{theorem}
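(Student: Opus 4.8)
The plan is to realize $\CC[G/N]$ as the ring of right-$N$-invariant regular functions on~$G$ and to read the statement off the algebraic Peter--Weyl decomposition of~$\CC[G]$. Write $T$ for the maximal torus with Lie algebra~$\lie{h}$, so that $B=TN$ is a Borel subgroup, $T$ normalizes~$N$, and $T\cong B/N$ acts on~$G/N$ by right translation. Functions on $G/N$ are exactly the functions $f$ on~$G$ with $f(gn)=f(g)$, so $\CC[G/N]=\CC[G]^{N}$ with $N$ acting by right translation. Pointwise multiplication is inherited from~$\CC[G]$, and the commuting actions of $G$ (left translation) and of $T$ (right translation) both act by algebra automorphisms; these two torus actions are what will produce the two gradings.

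First I would invoke complete reducibility. Since $G$ is reductive, the algebraic Peter--Weyl theorem gives a $G\times G$-module decomposition $\CC[G]\cong\bigoplus_{\lambda}V_\lambda\otimes V_\lambda^{*}$, the sum over dominant weights, realized by the matrix-coefficient map $v\otimes\xi\mapsto\bigl(g\mapsto\xi(\rho_\lambda(g)v)\bigr)$, with right translation acting on the $V_\lambda$ factor and left translation on the $V_\lambda^{*}$ factor. Taking $N$-invariants commutes with the direct sum and only affects the first factor, whence $\CC[G/N]\cong\bigoplus_\lambda V_\lambda^{N}\otimes V_\lambda^{*}$. The essential representation-theoretic input is that $V_\lambda^{N}$ is exactly the highest-weight line of~$V_\lambda$ --- the unique line annihilated by the nilpotent algebra $\lie{n}$ of~$N$ --- hence one-dimensional. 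This yields multiplicity one: each irreducible (appearing here as the left-$G$-module $V_\lambda^{*}$, and since $\lambda\mapsto -w_0\lambda$ permutes dominant weights this relabeling is harmless) occurs exactly once.

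It remains to extract the two gradings from the two commuting torus actions, both diagonalizable and both acting by algebra automorphisms. Under right translation by $t\in T$ the summand indexed by~$\lambda$ is an eigenspace, the torus acting through the highest-weight line $V_\lambda^{N}$; these characters separate the summands (with $\lambda\mapsto -w_0\lambda$ relating the right-$T$-character to the highest weight of the summand as a left-$G$-module), so the decomposition is a grading by the monoid of dominant weights. Because right translation acts by automorphisms, products of eigenvectors have added characters, so the product of the summand of highest weight~$\lambda$ with that of highest weight~$\mu$ lands in the summand of highest weight~$\lambda+\mu$; this is the highest-weight grading. For the second grading I would decompose each summand under the left $T$-action, i.e.\ by the $\lie{h}$-weights of the $G$-representation: a weight vector of actual weight~$\mu$ is a left-$T$-eigenvector of character~$\mu$, and multiplicativity of characters again shows that multiplying two weight vectors adds their weights. (That $V_{\lambda+\mu}$ is actually attained --- the Cartan product is nonzero --- follows from $\CC[G/N]$ having no zero divisors, but is not needed for the grading inclusions themselves.)

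The step I expect to carry the real weight is the decomposition $\CC[G/N]\cong\bigoplus_\lambda V_\lambda^{N}\otimes V_\lambda^{*}$: the algebraic Peter--Weyl theorem, where reductivity of~$G$ is indispensable, together with the highest-weight fact that the $\lie{n}$-invariants of an irreducible form a single line. Everything else --- the passage to right-$N$-invariants, the compatibility of invariants with the direct sum, and the construction of both gradings from the two commuting torus actions --- is formal once these two ingredients are in hand.
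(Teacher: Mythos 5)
Your argument is correct, but there is no internal proof in the paper to compare it against: the paper does not prove this statement, it imports it wholesale from Popov--Vinberg (hence the citation in the theorem header) and only uses it as a black box to define the Cartan product and prove Lemma~\ref{lem:ideal}. What you have written is, in substance, the standard proof of the cited result: the algebraic Peter--Weyl decomposition $\CC[G]\cong\bigoplus_\lambda V_\lambda^*\otimes V_\lambda$, passage to right-$N$-invariants using $\dim V_\lambda^N=1$, and the two commuting torus actions --- right translation by $T$ for the highest-weight grading, left translation by $T$ for the actual-weight grading --- each acting by algebra automorphisms, so that characters add under pointwise multiplication. You were also right to flag the one delicate bookkeeping point: the right-$T$-character of a Peter--Weyl summand and its highest weight as a left $G$-module differ by the involution $\lambda\mapsto-w_0\lambda$, and since that map is additive and permutes the dominant weights, both the multiplicity-one statement and the highest-weight grading survive the relabeling. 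Two minor spots could be tightened. First, the identification $\CC[G/N]=\CC[G]^N$ is not quite a tautology, since $G/N$ is only quasi-affine; it follows from $G\to G/N$ being a geometric quotient, so that regular functions on $G/N$ correspond exactly to right-$N$-invariant regular functions on~$G$. Second, your closing parenthesis correctly separates the nonvanishing of the Cartan product (which the paper deduces from $\CC[G/N]$ having no zero divisors, $G/N$ being irreducible as a variety) from the grading inclusions themselves; that separation matches exactly how the paper organizes Section~\ref{sec:ideal}.
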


For given $\lambda$ and $\mu$, we call \emph{Cartan product} the induced bilinear map $\odot: V_\lambda \times V_\mu\to V_{\lambda+\mu}$.
Given $u\in V_\lambda$ and $v\in V_\mu$, this defines $u\odot v\in V_{\lambda+\mu}$ as the projection of $u\otimes v\in V_\lambda\otimes V_\mu=V_{\lambda+\mu}\oplus\dots$.
Since $\CC[G/N]$ has no zero divisor, $u\odot v\neq 0$ whenever $u\neq 0$ and $v\neq 0$.  We deduce the following.

\begin{lemma}\label{lem:ideal}
  The set of highest weights of $w_0$-mixed irreducible representations of~$\lie{g}$ is an ideal~$\mathcal{I}_{\lie{g}}$ of the additive monoid~$\mathcal{M}$ of dominant elements of the root lattice.
\end{lemma}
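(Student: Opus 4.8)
The plan is to deduce the ideal property from a single structural fact: the Cartan product is \emph{multiplicative on $w_0$-signs}. Before that, observe that $\mathcal{I}_{\lie{g}} \subseteq \mathcal{M}$, as is required for $\mathcal{I}_{\lie{g}}$ to be an ideal at all: a $w_0$-mixed representation has $pq > 0$, hence $V^0 \neq 0$, so its highest weight is radical by Remark~\ref{rem:radical} and, being dominant, lies in $\mathcal{M}$. It then suffices to establish the absorption property $\mathcal{I}_{\lie{g}} + \mathcal{M} \subseteq \mathcal{I}_{\lie{g}}$: if $\lambda$ is the highest weight of a $w_0$-mixed representation and $\mu \in \mathcal{M}$, then $\rho_{\lambda+\mu}$ is again $w_0$-mixed.

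For the key fact, fix once and for all a representative $\overline{w_0} \in N_G(\lie{h})$ of~$w_0$; it acts on the whole algebra $\CC[G/N]$, hence simultaneously on the subrepresentations $V_\lambda$, $V_\mu$ and $V_{\lambda+\mu}$. The Cartan product $\odot$ is the restriction of the multiplication of $\CC[G/N]$, which is $G$-equivariant, so $\overline{w_0}\cdot(u \odot v) = (\overline{w_0}\cdot u)\odot(\overline{w_0}\cdot v)$ for all $u \in V_\lambda$ and $v \in V_\mu$. Since multiplication respects the weight grading of Theorem~\ref{thm:Vinberg}, the product of two zero-weight vectors is again a zero-weight vector, $u \odot v \in V_{\lambda+\mu}^0$; and since $\CC[G/N]$ has no zero divisors, $u \odot v \neq 0$ whenever $u, v \neq 0$. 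Combining these, if $u \in V_\lambda^0$ and $v \in V_\mu^0$ are $\overline{w_0}$-eigenvectors with signs $\eps_u, \eps_v \in \{\pm 1\}$, then $u \odot v$ is a nonzero $\overline{w_0}$-eigenvector in $V_{\lambda+\mu}^0$ with sign $\eps_u \eps_v$.

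The conclusion is then immediate. Choose $\lambda$ of a $w_0$-mixed representation: there are nonzero $u_+, u_- \in V_\lambda^0$ with $\overline{w_0} u_+ = u_+$ and $\overline{w_0} u_- = -u_-$. Let $\mu \in \mathcal{M}$; being dominant and radical, it has $V_\mu^0 \neq 0$ by Remark~\ref{rem:radical}, and since $\overline{w_0}$ induces an involution on $V_\mu^0$ we may pick a nonzero eigenvector $v$ of some definite sign $\eps_v$. Then $u_+ \odot v$ and $u_- \odot v$ are nonzero vectors of $V_{\lambda+\mu}^0$ with the opposite signs $\eps_v$ and $-\eps_v$; hence both $+1$ and $-1$ occur among the $w_0$-signs on $V_{\lambda+\mu}^0$, so $\rho_{\lambda+\mu}$ is $w_0$-mixed and $\lambda + \mu \in \mathcal{I}_{\lie{g}}$.

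I do not expect a serious obstacle: the whole argument rests on the $G$-equivariance of the Cartan product and on the absence of zero divisors furnished by Theorem~\ref{thm:Vinberg}. The only delicate point is that a \emph{single} representative $\overline{w_0}$ must act coherently on all three representations at once, which is precisely what their common realization inside $\CC[G/N]$ guarantees; that the resulting sign on each $V^0$ does not depend on the choice of representative follows because the ambiguity lies in $Z_G(\lie{h})$, which acts trivially on zero-weight spaces (Remark~\ref{rem:isogeny_does_not_matter}).
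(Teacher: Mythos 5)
Your proof is correct and follows essentially the same route as the paper's: both realize $V_\lambda$, $V_\mu$, $V_{\lambda+\mu}$ inside $\CC[G/N]$ via Theorem~\ref{thm:Vinberg}, use the resulting Cartan product's equivariance, weight-grading and absence of zero divisors to get sign-multiplicativity, and then pair eigenvectors $u_\pm\in V_\lambda^0$ with an eigenvector $v\in V_\mu^0$ to produce opposite-sign vectors in $V_{\lambda+\mu}^0$. Your write-up merely makes explicit two points the paper leaves implicit (the coherent choice of representative $\overline{w_0}$ and the containment $\mathcal{I}_{\lie{g}}\subseteq\mathcal{M}$), which is fine.
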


\begin{proof}
  Consider a $w_0$-mixed representation~$V_\lambda$ and a representation~$V_\mu$ whose highest weight is radical.
  We can choose $u_+$ and $u_-$ in the zero-weight space of $V_\lambda$ such that $w_0\cdot u_+=u_+$ and $w_0\cdot u_-=-u_-$, and choose $v$ in the zero-weight space of $V_\mu$ such that $w_0\cdot v=\pm v$ for some sign.
  Then $u_+\odot v$ and $u_-\odot v$ are non-zero elements of the zero-weight space of $V_{\lambda+\mu}$ on which $w_0$ acts by opposite signs.
\end{proof}


\section{\label{sec:inductionrule}Proof of~\ref{main mixed}: that other representations are $w_0$-mixed}

Let $\mathcal{I}^{\text{Table}}_{\lie{g}}$ be the set of dominant radical weights that are not of the form $\lambda=kp_i\varpi_i$, $k\leq m_i$ (with data $p_i$, $m_i$ given in Table~\ref{mui-ni-table}).  Observe that $\mathcal{I}^{\text{Table}}_{\lie{g}}$ is an ideal of~$\mathcal{M}$.
In Section~\ref{sec:proof pure} we showed $\mathcal{I}_{\lie{g}}\subset\mathcal{I}^{\text{Table}}_{\lie{g}}$.
We now show that $\mathcal{I}^{\text{Table}}_{\lie{g}}\subset\mathcal{I}_{\lie{g}}$, namely that $V_\lambda$ is $w_0$-mixed for radical~$\lambda$ other than those described by Table~\ref{mui-ni-table}.
By Lemma~\ref{lem:ideal}, it is enough to show this for the basis of $\mathcal{I}^{\text{Table}}_{\lie{g}}$.
For any given group, $\mathcal{I}^{\text{Table}}_{\lie{g}}$ has a finite basis so we simply use the algorithm of Section~\ref{sec:algo} to conclude for $A_{\leq 5}$, $B_{\leq 4}$, $C_{\leq 5}$, $D_{\leq 6}$ and all exceptional groups. The results of these computations are listed in the appendix, in Table~\ref{mixed_by_hand}.

Now let $\lie{g}$ be one of $A_{>5}$, $B_{>4}$, $C_{>5}$, $D_{>6}$ and $\lambda$ be in $\mathcal{I}^{\text{Table}}_{\lie{g}}$.
We proceed by induction on the rank of~$\lie{g}$.

Define as follows a reductive Lie subalgebra $\lie{f}\times\lie{g}'\subset\lie{g}$:
\begin{itemize}
\item If $\lie{g} = \lie{sl}(n,\CC)$, we choose $\lie{f}\times\lie{g}' \simeq \bigl(\lie{gl}(1,\CC)\times\lie{sl}(2,\CC)\bigr)\times\lie{sl}(n-2,\CC)$,
  where $\lie{f}$ has the roots $\pm (e_1-e_n)$ and $\lie{g'}$ has the roots $\pm(e_i-e_j)$ for $1<i<j<n$.
\item If $\lie{g} = \lie{so}(n,\CC)$, we choose $\lie{f}\times\lie{g}' \simeq \lie{so}(4,\CC)\times\lie{so}(n-4,\CC)$,
  where $\lie{f}$ has the roots $\pm e_1\pm e_2$ and $\lie{g'}$ has the roots $\pm e_i\pm e_j$ for $3\leq i<j\leq n$.
\item If $\lie{g} = \lie{sp}(2n,\CC)$, we choose $\lie{f}\times\lie{g}' \simeq \lie{sp}(2,\CC)\times\lie{sp}(2n-2,\CC)$,
  where $\lie{f}$ has the roots $\pm 2e_1$ and $\lie{g'}$ has the roots $\pm e_i\pm e_j$ for $2\leq i<j\leq n$ and $\pm 2e_i$ for $2\leq i\leq n$.
\end{itemize}
In all three cases, $\lie{f}\times\lie{g}'$ and $\lie{g}$ share their Cartan subalgebra hence restricting a representation~$V$ of~$\lie{g}$ to $\lie{f}\times\lie{g}'$ does not change the zero-weight space~$V^0$.
Additionally, consider any connected Lie group~$G$ with Lie algebra~$\lie{g}$: then the $w_0$~elements of the connected subgroup of~$G$ with Lie algebra $\lie{f}\times \lie{g}'$ and of~$G$ itself coincide, or more precisely have a common representative in~$G$, because the Lie algebras have the same Lie subalgebra~$\lie{s}$ defined in Proposition~\ref{sl2_copies}.
It follows that a representation of~$\lie{g}$ is $w_0$-mixed if and only if its restriction to $\lie{f}\times\lie{g}'$ is.

Next, decompose $V_\lambda=\bigoplus_\iota(V_{\xi_\iota}\otimes V_{\mu_\iota})$ into irreducible representations of $\lie{f}\times\lie{g}'$, where $\xi_\iota$ and $\mu_\iota$ are dominant weights of $\lie{f}$ and~$\lie{g}'$, respectively.
Consider the subspace
\begin{equation}\label{lamdba0bullet}
  V_\lambda^{(0,\bullet)}\coloneqq\bigoplus_\iota(V_{\xi_\iota}^0\otimes V_{\mu_\iota})\subset V_\lambda
\end{equation}
fixed by the Cartan algebra of~$\lie{f}$.
It is a representation of~$\lie{g}'$ whose zero-weight subspace coincides with that of~$V_\lambda$.
The direct sum obviously restricts to radical~$\xi_\iota$, and $\dim V_{\xi_\iota}^0=1$ because we chose $\lie{f}$ to be a product of $\lie{sl}(2,\CC)$ and $\lie{gl}(1,\CC)$ factors.
Thus the $w_0$~element of $\lie{g}$ acts on $V_{\xi_\iota}^0\otimes V_{\mu_\iota}$ in the same way, up to a sign, as the $w_0$~element of $\lie{g}'$ acts on~$V_{\mu_\iota}$.
Lemma~\ref{lem:nu-exists} shows that $V_\lambda^{(0,\bullet)}$ has an irreducible subrepresentation $V_\nu$ such that $\nu\in\mathcal{I}^{\text{Table}}_{\lie{g}'}$.
By the induction hypothesis, $V_\nu$~is then $w_0$-mixed hence $w_0$ has both eigenvalues $\pm 1$ on the zero-weight space $V_\lambda^0\subset V_\lambda^{(0,\bullet)}$, namely $V_\lambda$ is $w_0$-mixed.

This concludes the proof of Theorem~\ref{main_theorem}.

There remains to state and prove two lemmas.
Let $\lie{g}$ be $A_{n-1}$, $B_n$, $C_n$ or~$D_n$ and let $\lambda$ be a dominant radical weight of~$\lie{g}$.
It can then be expressed in the standard basis $e_1,\dots,e_n$ as $\lambda=\sum_{i=1}^n\lambda_ie_i$ where $\lambda_1\geq\lambda_2\geq\dots\geq\lambda_n$ are integers subject to: for $A_{n-1}$, $\sum_i\lambda_i=0$; for $B_n$, $\lambda_n\geq 0$; for $C_n$, $\lambda_n\geq 0$ and $\sum_i\lambda_i\in 2\ZZ$; for $D_n$, $\lambda_{n-1}\geq\abs{\lambda_n}$ and $\sum_i\lambda_i\in 2\ZZ$.
In addition, let $\lie{f}\times\lie{g}'\subset\lie{g}$ be the subalgebra defined above.
We identify weights of~$\lie{g}'$ with the corresponding weights of~$\lie{g}$ (acting trivially on the Cartan subalgebra of~$\lie{f}$). Note that this introduces a shift in their coordinates: the dual of the Cartan subalgebra of~$\lie{g}'$ is spanned by a subset of the vectors~$e_i$ (corresponding to~$\lie{g}$) that starts at $e_2$ or $e_3$, not at $e_1$ as expected.

\begin{lemma}\label{lem:define-mu}
  Let $\mu$ be the dominant weight of $\lie{g}'$ defined as follows.
  \begin{itemize}
  \item For $A_{n-1}$, $\mu=\bigl(\sum_{i=1}^{\ell-1}\lambda_ie_{i+1}\bigr)+\lambda_\ell e_\ell+\bigl(\sum_{i=\ell+1}^n\lambda_ie_{i-1}\bigr)$
    where $1<\ell<n$ is an index such that $\lambda_{\ell-1}+\lambda_{\ell}\geq 0\geq\lambda_{\ell}+\lambda_{\ell+1}$ (when several~$\ell$ obey this, $\mu$ does not depend on the choice).
  \item For $B_n$, $\mu=\sum_{i=1}^{n-2}\lambda_i e_{i+2}$.
  \item For $C_n$, $\mu=\sum_{i=1}^{n-1}\lambda_i e_{i+1}-\eta e_n$ where $\eta\in\{0,1\}$ obeys $\eta\equiv\lambda_n\pmod{2}$.
  \item For $D_n$, $\mu=\sum_{i=1}^{n-2}\lambda_i e_{i+2}-\eta e_n$ where $\eta\in\{0,1\}$ obeys $\eta\equiv\lambda_{n+1}+\lambda_n\pmod{2}$.
  \end{itemize}
  Then $V_\mu$ is a sub-representation of the space $V_\lambda^{(0,\bullet)}$ defined earlier.
\end{lemma}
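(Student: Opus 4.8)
The plan is to identify $\mu$ as a highest weight of the $\lie{g}'$-module $V_\lambda^{(0,\bullet)}$ by the usual extremality criterion. Since $V_\lambda^{(0,\bullet)}$ is the sum of the weight spaces $V_\lambda^\nu$ whose weight $\nu$ vanishes on the Cartan subalgebra $\lie{h}_{\lie{f}}$ of $\lie{f}$, and this sum is stable under $\lie{g}'$ (which commutes with $\lie{h}_{\lie{f}}$), it suffices to verify four properties of $\mu$: (i) $\mu$ is a weight of $V_\lambda$; (ii) $\mu$ vanishes on $\lie{h}_{\lie{f}}$; (iii) $\mu$ is $\lie{g}'$-dominant; (iv) for every simple root $\alpha$ of $\lie{g}'$, the weight $\mu+\alpha$ is not a weight of $V_\lambda$. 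Granting these, any nonzero vector of weight $\mu$ lies in $V_\lambda^{(0,\bullet)}$ and is killed by all raising operators of $\lie{g}'$; decomposing it along the irreducible $\lie{g}'$-summands then produces a copy of $V_\mu$.

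Properties (ii) and (iii) I would read off directly from the formulas for $\mu$. In each type $\mu$ is supported on the coordinates $e_i$ belonging to $\lie{g}'$ (those beginning at $e_2$ or $e_3$), which is exactly the statement that it vanishes on $\lie{h}_{\lie{f}}$; this is the purpose of the coordinate shift. For $B_n$, $C_n$, $D_n$ dominance is almost immediate since $\mu$ is $\lambda$ shifted and truncated, the only points needing care being the last coordinate, \ie $\lambda_{n-1}-\eta\ge 0$ for $C_n$ and $\abs{\lambda_{n-2}-\eta}\le\lambda_{n-3}$ for $D_n$; both follow from the parity constraint, since $\eta=1$ forces the relevant $\lambda_i$ to be odd, hence $\geq 1$. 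For $A_{n-1}$ dominance amounts to $\lambda_{\ell-2}\ge\lambda_{\ell-1}+\lambda_\ell+\lambda_{\ell+1}\ge\lambda_{\ell+2}$, which is precisely what the defining inequalities $\lambda_{\ell-1}+\lambda_\ell\ge 0\ge\lambda_\ell+\lambda_{\ell+1}$ yield; such an $\ell$ exists because the consecutive sums $\lambda_i+\lambda_{i+1}$ decrease from $\lambda_1+\lambda_2\ge 0$ to $\lambda_{n-1}+\lambda_n\le 0$, and when two choices of $\ell$ coexist the relevant central sums differ by $\lambda_\ell+\lambda_{\ell+1}=0$, so $\mu$ does not depend on the choice.

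For property (i) I would show $\mu\preceq\lambda$ in the dominance order, together with the congruence $\mu\equiv\lambda$ modulo the root lattice; for these simple types the weights of $V_\lambda$ are exactly the lattice-compatible weights $\preceq\lambda$, so (i) follows. The congruence is automatic for $A_{n-1}$ and $B_n$ and is exactly what the parities $\eta\equiv\lambda_n$ (resp.\ $\eta\equiv\lambda_{n-1}+\lambda_n$) secure for $C_n$ (resp.\ $D_n$). For the inequality $\mu\preceq\lambda$ I would realize the passage from $\lambda$ to $\mu$ as a majorization-decreasing local move with the remaining entries fixed: for $A_{n-1}$ the three central entries $\lambda_{\ell-1},\lambda_\ell,\lambda_{\ell+1}$ are replaced by the equal-sum multiset $\{\lambda_{\ell-1}+\lambda_\ell+\lambda_{\ell+1},0,0\}$, which they majorize thanks to the inequalities defining $\ell$; for $B_n$, $C_n$, $D_n$ the move merely discards the smallest entries (up to the $\eta$-shift), which again lowers all partial sums.

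The main obstacle is (iv), the maximality. I would treat one simple root $\alpha$ of $\lie{g}'$ at a time and show that $\mu+\alpha$ already violates majorization, \ie that adding $\alpha$ raises some partial sum of the sorted weight strictly above the corresponding partial sum of $\lambda$. This is where the precise shape of $\mu$ is used: for $B_n$, $C_n$, $D_n$ the leading coordinates of $\mu$ coincide with those of $\lambda$, so every simple root first increases a coordinate at which the partial sum is already tight. For $A_{n-1}$ the index $\ell$ is chosen exactly so that the partial sums on either side of the central entry $S=\lambda_{\ell-1}+\lambda_\ell+\lambda_{\ell+1}$ are tight against $\lambda$, and the balance $\lambda_{\ell-1}+\lambda_\ell\ge 0\ge\lambda_\ell+\lambda_{\ell+1}$ is what forces any upward move across position $\ell$ to overshoot. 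The short roots ($e_a$ for $B_n$, $2e_a$ for $C_n$) and the $\eta$-corrected final coordinate in $C_n$, $D_n$ require a little separate bookkeeping, but this is elementary; pinning down the tightness of the right partial sums is the one genuinely delicate point of the argument.
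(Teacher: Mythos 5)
Your proposal is correct, and it differs from the paper's proof precisely at the endgame. The shared core is the same: both arguments rest on the explicit characterization of which weights vanishing on the Cartan of $\lie{f}$ occur in $V_\lambda$ (partial sums of sorted coordinates, or sorted absolute values, bounded by those of $\lambda$), and on the fact that $\mu$ saturates these bounds. Where you diverge is in how a highest-weight vector of weight $\mu$ is produced. You verify your property (iv), that $\mu+\alpha$ is not a weight of $V_\lambda$ for each simple root $\alpha$ of $\lie{g}'$, so that every weight-$\mu$ vector is annihilated by the raising operators of $\lie{g}'$. The paper never examines $\mu+\alpha$: it takes an irreducible summand $V_\nu\subset V_\lambda^{(0,\bullet)}$ having $\mu$ among its weights and squeezes $\nu$ from two sides --- $\nu$, being itself a weight of $V_\lambda^{(0,\bullet)}$, obeys the bounds (e.g.\ \eqref{A-type-nu-condition}) that $\mu$ saturates, giving one set of partial-sum inequalities, while $\langle\nu-\mu,\varpi_k'\rangle\geq 0$ (since $\mu$ is a weight of $V_\nu$) gives the reverse, forcing $\nu=\mu$ (plus a parity argument for the last coordinate in types $C$, $D$ when $\eta=1$). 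The two mechanisms are logically equivalent, but the paper's squeeze only ever applies the weight criterion to dominant weights, so no re-sorting of perturbed vectors is needed; your route must handle the non-dominant vectors $\mu+\alpha$, which costs a little more bookkeeping but is more self-contained. I checked that your ``one genuinely delicate point'' does go through: it suffices each time to exhibit one subset of coordinates of $\mu+\alpha$ whose sum violates the corresponding partial-sum bound of $\lambda$. For $B_n$, $C_n$, $D_n$, and for $\alpha=e_k-e_{k+1}$ with $k<\ell$ in type $A$, the leading coordinates do it (the bound there is tight and the $+1$ lands on them, with the small $\eta$-parity check you describe for the long/last roots of $C_n$, $D_n$). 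For $k\geq\ell$ in type $A$ the left-hand sums do \emph{not} give a violation in general (e.g.\ when $\lambda_{k+1}<0$); you must instead use the complementary, right-hand sums, which $\mu$ also saturates: the coordinates of $\mu+\alpha$ in positions $k+1,\dots,n-1$ sum to $\bigl(\sum_{i=k+2}^n\lambda_i\bigr)-1$, strictly below the sum of the $n-k-1$ smallest entries of $\lambda$, and since both total sums vanish this is exactly a majorization violation. So your plan is complete in outline and sound; just be aware that ``tightness on either side of $\ell$'' really means left-hand saturation for $k<\ell$ and right-hand saturation for $k\geq\ell$, and that your phrase ``the weights of $V_\lambda$ are exactly the lattice-compatible weights $\preceq\lambda$'' should be read as a statement about the dominant (sorted) conjugate, as your own majorization arguments implicitly do.
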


\begin{proof}[for $A_{n-1}$]
  Let $\nu=\sum_{i=2}^{n-1}\nu_ie_i$ be a dominant radical weight of~$\lie{g}'$.
  The weight~$\nu$ is among weights of~$V_\lambda^{(0,\bullet)}$ if and only if it is among weights of~$V_\lambda$.
  The condition is that $\langle\lambda-\tilde{\nu},\varpi_k\rangle\geq 0$ for all~$k$,
  where $\tilde{\nu}$ is the unique dominant weight of~$\lie{g}$ in the orbit of~$\nu$ under the Weyl group of~$\lie{g}$.

  Explicitly, $\tilde{\nu}=\bigl(\sum_{i=1}^{p-1}\nu_{i+1}e_i\bigr)+\sum_{i=p+2}^n\nu_{i-1}e_i$ where $p$ is any index such that $\nu_p\geq 0\geq\nu_{p+1}$.
  Then the condition is $\sum_{i=1}^k\lambda_i\geq\sum_{i=2}^{k+1}\nu_i$ for $1\leq k<p$ and
  $\sum_{i=1}^p\lambda_i\geq\sum_{i=2}^p\nu_i$ and
  $\sum_{i=1}^k\lambda_i\geq\sum_{i=2}^{k-1}\nu_i$ for $p<k<n$.
  Let us show that this is equivalent to
  \begin{equation}\label{A-type-nu-condition}
    \sum_{i=2}^k\nu_i\leq\min\biggl(\sum_{i=1}^{k-1}\lambda_i,\sum_{i=1}^{k+1}\lambda_i\biggr) \text{ for all } 2\leq k\leq n-2 .
  \end{equation}
  In one direction the only non-trivial statement is that $2 \sum_{i=1}^p\lambda_i \geq \sum_{i=1}^{p-1}\lambda_i + \sum_{i=1}^{p+1} \lambda_i \geq 2 \sum_{i=2}^p\nu_i$, where we used $2\lambda_p\geq \lambda_p+\lambda_{p+1}$.  In the other direction we check $\sum_{i=2}^k\nu_i\leq\sum_{i=2}^{\min(p,k+2)}\nu_i\leq\sum_{i=1}^{k+1}\lambda_i$ for $k\leq p-1$ using $\nu_2\geq\cdots\geq\nu_p\geq 0$ and similarly for $p+1\leq k$ using $0\geq\nu_{p+1}\geq\dots\geq\nu_{n-1}$.

  Now, $\lambda_{\ell-1}+\lambda_{\ell}\geq 0\geq\lambda_{\ell}+\lambda_{\ell+1}$ implies $\lambda_{\ell-2}\geq\lambda_{\ell-1}\geq\lambda_{\ell-1}+\lambda_{\ell}+\lambda_{\ell+1}\geq\lambda_{\ell+1}\geq\lambda_{\ell+2}$, so $\mu$ is a dominant weight of~$\lie{g}'$.  It is radical because $\sum_{i=2}^{n-1}\mu_i=\sum_{i=1}^n\lambda_i=0$.
  Furthermore, $\mu$ saturates all bounds~\eqref{A-type-nu-condition} (with $\nu$ replaced by~$\mu$), as seen using $\lambda_k+\lambda_{k+1}\geq 0$ or $\leq 0$ for $k<\ell$ or $k\geq\ell$ respectively.
  In particular we deduce that $\mu$~is among the weights of $V_\lambda^{(0,\bullet)}$, hence of some irreducible summand $V_\nu\subset V_\lambda^{(0,\bullet)}$.
  The dominant radical weight~$\nu$ of $\lie{g}'$ must also obey~\eqref{A-type-nu-condition}, namely $\sum_{i=2}^k\nu_i\leq\sum_{i=2}^k\mu_i$ (due to the aforementioned saturation).
  Since $\mu$ is dominant and among weights of~$V_\nu$, we must also have $\langle\nu-\mu,\varpi_k'\rangle\geq 0$ for all fundamental weights $\varpi_k'$ of~$\lie{g}'$.
  This is precisely the reverse inequality $\sum_{i=2}^k\nu_i\geq\sum_{i=2}^k\mu_i$.  We conclude that $\mu=\nu$.
\end{proof}
\begin{proof}[for $B_n$, $C_n$, $D_n$]
  Let $\eps=1$ for $C_n$ and otherwise $\eps=2$.
  Again, a dominant radical weight $\nu=\sum_{i=1+\eps}^n(\nu_ie_i)$ of $\lie{g}'$ is a weight of $V_\lambda^{(0,\bullet)}$ if and only if all $\langle\lambda-\tilde{\nu},\varpi_k\rangle\geq 0$, where $\tilde{\nu}$ is the unique dominant weight of~$\lie{g}$ in the Weyl orbit of~$\nu$.
  In all three cases, $\tilde{\nu}=\sum_{i=1}^{n-\eps}\abs{\nu_{i+\eps}}e_i$, where the absolute value is only useful for the $\nu_n$ component for~$D_n$.
  The condition is worked out to be $\sum_{i=1}^k\lambda_i\geq\sum_{i=1}^k\abs{\nu_{i+\eps}}$ for $1\leq k\leq n-\eps$.
  It is easy to check that $\mu$ is a dominant radical weight of~$\lie{g}'$ and it obeys these conditions.

  Consider now an irreducible summand $V_\nu\subset V_\lambda^{(0,\bullet)}$ that has~$\mu$ among its weights.
  On the one hand, $\sum_{i=1}^k\lambda_i\geq\sum_{i=1}^k\abs{\nu_{i+\eps}}$ for $1\leq k\leq n-\eps$, where the absolute value is only useful for~$\nu_n$ for~$D_n$.
  On the other hand, $\langle\nu-\mu,\varpi'\rangle\geq 0$ for all dominant weights~$\varpi'$ of~$\lie{g}'$ (in particular $e_{1+\eps}+\dots+e_{k+\eps}$),
  so $\sum_{i=1}^k\nu_{i+\eps}\geq\sum_{i=1}^k\mu_{i+\eps}$ for $1\leq k\leq n-\eps$.
  The two inequalities fix $\nu_i=\mu_i$ for all~$i$, except $i=n$ when $\eta=1$ for $C_n$ and~$D_n$:
  in these cases we conclude by using $\sum_i\nu_i-\sum_i\mu_i\in 2\ZZ$ since both weights are radical.
\end{proof}

\begin{lemma}\label{lem:nu-exists}
  For any $\lambda\in\mathcal{I}^{\text{Table}}_{\lie{g}}$, there exists $\nu\in\mathcal{I}^{\text{Table}}_{\lie{g}'}$ such that the representation of~$\lie{g}'$ with highest weight~$\nu$ is a subrepresentation of $V_\lambda^{(\bullet,0)}$.
\end{lemma}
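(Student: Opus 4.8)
The plan is to upgrade the weight~$\mu$ produced by Lemma~\ref{lem:define-mu} — which already satisfies $V_\mu\subset V_\lambda^{(0,\bullet)}$ — into a weight that is moreover bad, \ie that lies in $\mathcal{I}^{\text{Table}}_{\lie{g}'}$. The first step is simply to ask whether $\mu$ is already bad: I would compute the fundamental-weight coefficients of~$\mu$ from its explicit coordinate formula, type by type, and compare them with the short list of good weights $kp_i\varpi_i$, $k\le m_i$, for $\lie{g}'$ in Table~\ref{mui-ni-table}. Whenever $\mu$ turns out to be bad I take $\nu=\mu$, and the inclusion is exactly the content of Lemma~\ref{lem:define-mu}. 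In the generic situation the badness of~$\lambda$ passes directly to~$\mu$: the coordinate-merging that defines~$\mu$ leaves at least two nonzero fundamental-weight coefficients, so $\mu\in\mathcal{I}^{\text{Table}}_{\lie{g}'}$ and nothing more is needed.

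The delicate point, and the reason this lemma is not immediate from Lemma~\ref{lem:define-mu}, is that $\mu$ can collapse to a good weight while $\lambda$ remains bad. For instance the bad weight $\lambda=(4,0,0,-1,-1,-1,-1)$ of $\lie{sl}(7)$ reduces to $\mu=(4,-1,-1,-1,-1)=p_1\varpi_1'$, which is good for $\lie{g}'=\lie{sl}(5)$; the analogous collapse to a multiple of an extreme fundamental weight occurs in the $B$, $C$ and $D$ families. In these degenerate cases $\nu=\mu$ is not allowed and a strictly smaller constituent must be exhibited.

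To treat the degenerate cases I would use that $\lambda$ being bad keeps $V_\lambda^{(0,\bullet)}$ from reducing to the single constituent~$V_\mu$, so that constituents strictly below~$\mu$ are present. When $\mu$ is a multiple of an extreme fundamental weight — a single-row or single-column shape — I would obtain $\nu$ by moving one box out of that row or column: this gives a dominant radical weight $\nu\prec\mu$ (differing from~$\mu$ by a single root, hence still radical) whose expansion now carries a second nonzero fundamental-weight coefficient, so that $\nu\in\mathcal{I}^{\text{Table}}_{\lie{g}'}$. It then remains to check that this $\nu$ really is the highest weight of a constituent of $V_\lambda^{(0,\bullet)}$. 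I would certify this by a branching computation in the realizations $\square=\CC^2\oplus\square'$ for $A$ and $C$ and $\square=\CC^4\oplus\square'$ for $B$ and $D$: the multiplicity of~$V_\nu$ in the zero-$\lie{f}$-weight part is a classical branching coefficient (a Littlewood–Richardson coefficient in type~$A$, a sum of such for $B$, $C$, $D$) with a prescribed $\lie{f}$-component, and one shows it is nonzero precisely because $\lambda$ is not itself a good weight.

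I expect the main obstacle to be exactly this degenerate regime. The coordinate bookkeeping that settles the generic case is blind to multiplicities, whereas here it is a multiplicity statement — that the box-moved weight~$\nu$ occurs as its own constituent rather than being absorbed into~$V_\mu$ — that drives the argument. Equivalently, the real content is the contrapositive: were every constituent of $V_\lambda^{(0,\bullet)}$ good, then $\lambda$ would have to be a good weight of~$\lie{g}$. Proving this uniformly over the four classical families, and over all multipliers for which $\mu$ happens to be good, is where the work concentrates; in each instance the output is just the required bad~$\nu$ with $V_\nu\subset V_\lambda^{(0,\bullet)}$, which is all the lemma asserts.
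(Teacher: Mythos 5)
Your overall strategy coincides with the paper's: take the weight~$\mu$ of Lemma~\ref{lem:define-mu}, set $\nu=\mu$ whenever $\mu\in\mathcal{I}^{\text{Table}}_{\lie{g}'}$, and otherwise exhibit a second constituent of $V_\lambda^{(0,\bullet)}$. But two points diverge, one factual and one a genuine gap. Factually, your claim that the collapse of a bad~$\lambda$ to a good~$\mu$ ``occurs in the $B$, $C$ and $D$ families'' is wrong under the lemma's hypothesis: in those types $\mu$ retains the coordinates $\lambda_1,\dots,\lambda_{n-2}$ (resp.\ $\lambda_{n-3}$) outright, so the paper simply enumerates the good weights $\mu$ could be ($m\varpi_1'$, $2\varpi_2'$, the weights $\sum_{i=1}^m e_{i+\eps}$, plus one shape for even~$D_n$ that cannot arise from the construction at all) and checks that each forces $\lambda$ itself to be good, contradicting $\lambda\in\mathcal{I}^{\text{Table}}_{\lie{g}}$. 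So in types $B$, $C$, $D$ no second constituent is ever needed, and your planned branching computations with $\square=\CC^4\oplus\square'$ address a vacuous case; your contrapositive (``were every constituent good, $\lambda$ would be good'') is there established through $\mu$ alone. The degenerate regime is real only in type~$A$, where the merging $\mu_\ell=\lambda_{\ell-1}+\lambda_\ell+\lambda_{\ell+1}$ destroys information (your $\lie{sl}(7)$ example is correct).

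The gap sits in that type-$A$ case, which is the heart of the lemma. You assert that the box-moved weight~$\nu$ occurs in $V_\lambda^{(0,\bullet)}$ ``precisely because $\lambda$ is not itself a good weight'', but this multiplicity statement is never proved, and your fallback principle --- that badness of~$\lambda$ prevents $V_\lambda^{(0,\bullet)}$ from reducing to the single constituent~$V_\mu$ --- is both unproven and insufficient: having several constituents does not yield a \emph{bad} one. The paper closes exactly this hole in two steps. First it inverts the construction of~$\mu$ to pin down $\lambda$ completely: $\lambda=m\bigl(\sum_{i=1}^{n-3}e_i\bigr)+le_{n-2}+ke_{n-1}-\bigl((n-3)m+l+k\bigr)e_n$ with $m\geq l\geq\abs{k}$, the good case $k=l=m$ excluded. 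Then it exhibits, via the two-step interlacing branching rule for $\lie{gl}(n,\CC)\supset\lie{gl}(n-1,\CC)\supset\lie{gl}(n-2,\CC)$ with zero-trace conditions at each step, an explicit chain whose bottom weight $(m,\dots,m,l,k,-S)$ cannot be a multiple of a fundamental weight since the rank is at least~$6$, hence lies in $\mathcal{I}^{\text{Table}}_{\lie{g}'}$. Your single-box candidate can in fact be certified by the same interlacing computation --- the exclusion $(k,l)\neq(m,m)$ is precisely what makes a valid chain exist --- but as written the decisive step of your argument is a claim, not a proof.
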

\begin{proof}[for $A_{n-1}$ with $n\geq 7$]
  If the weight~$\mu$ defined by Lemma~\ref{lem:define-mu} is in $\mathcal{I}^{\text{Table}}_{\lie{g}'}$ we are done.
  Otherwise, $\mu=m(n-2)\varpi_1'$ or $\mu=m(n-2)\varpi_{n-3}'$.
  By symmetry under $e_i\mapsto -e_{n+1-i}$ it is enough to consider the second case, so $\mu=\sum_{i=2}^{n-1}\mu_ie_i$ with
  $\mu_i=m$ for $2\leq i\leq n-2$ and $\mu_{n-1}=-m(n-3)$.
  By the construction of $\mu$ in terms of~$\lambda$ we know that there exists $1<\ell<n$ such that
  $\mu_i=\lambda_{i-1}\geq 0$ for $1<i<\ell$ and $\lambda_{\ell-1}\geq\mu_\ell=\lambda_{\ell-1}+\lambda_{\ell}+\lambda_{\ell+1}\geq\lambda_{\ell+1}$ and $\mu_i=\lambda_{i+1}\leq 0$ for $\ell<i<n$.
  Since only $\mu_{n-1}\leq 0$, the last constraint sets $\ell=n-2$ or $\ell=n-1$.
  In the first case, we learn that $\lambda_i=m$ for $1\leq i\leq n-4$, but also that $m=\mu_{n-3}=\lambda_{n-4}\geq\lambda_{n-3}\geq\mu_{n-2}=m$ so $\lambda_{n-3}=m$,
  thus $\lambda_{n-2}+\lambda_{n-1}=\mu_{n-2}-\lambda_{n-3}=0$ and we can change $\ell$ to $n-1$
  (recall that the choice of $\ell$ such that $\lambda_{\ell-1}+\lambda_{\ell}\geq 0\geq \lambda_{\ell}+\lambda_{\ell+1}$ does not affect~$\mu$).
  We are thus left with the case $\ell=n-1$, where $\lambda_i=m$ for $1\leq i\leq n-3$, and where $\lambda_{n-2}+\lambda_{n-1}\geq 0$ and $m=\lambda_{n-3}\geq\lambda_{n-2}$.

  We conclude that $\lambda=m\bigl(\sum_{i=1}^{n-3}e_i\bigr)+le_{n-2}+ke_{n-1}-\bigl((n-3)m+l+k\bigr)e_n$ for integers $m\geq l\geq\abs{k}$,
  with the exclusion of the case $k=l=m$ because of $\lambda\in\mathcal{I}^{\text{Table}}_{\lie{g}}$.
  For these dominant weights, the particular irreducible summand $V_\mu\subset V_\lambda^{(0,\bullet)}$ of Lemma~\ref{lem:define-mu} is $w_0$-pure, but we now determine another summand that is $w_0$-mixed.
  The branching rules from $\lie{g}$ to $\lie{f}\times\lie{g}'$ can easily be deduced from the classical branching rules from $\lie{gl}(n,\CC)$ to $\lie{gl}(n-1,\CC)$ (given for example in \cite[Theorem 9.14]{Kna96}).
  Namely, consider the representation of $\lie{gl}(n,\CC)$ on $V_\lambda$ such that the diagonal $\lie{gl}(1,\CC)$ acts by zero.
  Then $V_\lambda^{(0,\bullet)}\subset V_\lambda$ is the subspace on which all three $\lie{gl}(1,\CC)$ factors of $\lie{gl}(1,\CC)\times\lie{gl}(n-2,\CC)\times\lie{gl}(1,\CC)\subset\lie{gl}(n,\CC)$ act by zero.
  It decomposes into irreducible representations of $\lie{g}'\simeq\lie{sl}(n-2,\CC)$ with highest weights $\lambda''=\sum_{i=2}^{n-1}\lambda''_ie_i$ such that $\sum_i\lambda''_i=0$ and such that there exists $\lambda'_1,\dots,\lambda'_{n-1}$ with $\sum_i\lambda'_i=0$, and $\lambda_1\geq\lambda'_1\geq\lambda_2\geq\dots\geq\lambda'_{n-1}\geq\lambda_n$ and $\lambda'_1\geq\lambda''_2\geq\lambda'_2\geq\dots\geq\lambda''_{n-1}\geq\lambda'_{n-1}$.
  Concretely we focus on the summand where $(\lambda_i)_{i=1}^n$ and $(\lambda'_i)_{i=1}^{n-1}$ and $(\lambda''_i)_{i=2}^{n-1}$ all take the form $(m,\dots,m,l,k,-S)$ where $S$ is the sum of all other entries, with a different number of~$m$ in each case.  Given that we started in rank at least~$6$, the resulting weight $\lambda''$ cannot be a multiple of a fundamental weight, hence $\lambda''\in\mathcal{I}^{\text{Table}}_{\lie{g}'}$.
\end{proof}

\begin{proof}[for $B_n$ with $n\geq 5$, $C_n$ with $n\geq 6$, $D_n$ with $n\geq 7$]
  We recall $\eps=1$ for $C_n$ and otherwise $\eps=2$.
  If the weight~$\mu$ defined by Lemma~\ref{lem:define-mu} is in $\mathcal{I}^{\text{Table}}_{\lie{g}'}$ we are done.
  Otherwise, $\mu$ can take a few possible forms because we took $\rank\lie{g}'=n-\eps$ large enough to avoid special values listed in Table~\ref{mui-ni-table}.
  Note that by construction of $\mu=\sum_{i=1+\eps}^n\mu_ie_i$ we have $\lambda_i=\mu_{i+\eps}$ for $1\leq i\leq n-3$ for $D_n$ and $1\leq i\leq n-2$ for $B_n$ and~$C_n$.
  The possible dominant radical weights not in~$\mathcal{I}^{\text{Table}}_{\lie{g}'}$ are as follows.
  \begin{itemize}
  \item First, $\mu=m\varpi_1'=me_{1+\eps}$, where additionally $m$ is even for $C_n$ and $D_n$.
    Then $\lambda_1=\mu_{1+\eps}=m$ and $\lambda_2=\mu_{2+\eps}=0$ fix $\lambda=m\varpi_1$, which is not in $\mathcal{I}^{\text{Table}}_{\lie{g}}$.
  \item Second, $\mu=2\varpi_2'=2(e_{1+\eps}+e_{2+\eps})$, except for $D_n$ with odd~$n$.
    Then $\lambda_1=\lambda_2=2$ and $\lambda_3=0$ fix $\lambda=2\varpi_2$, which is not in $\mathcal{I}^{\text{Table}}_{\lie{g}}$.
  \item Third, $\mu=\sum_{i=1}^m e_{i+\eps}$ for some $m\geq 2$, except for $D_n$ with odd~$n$, and where additionally $m$ is even for $D_n$ with even~$n$ and for~$C_n$.
    Since $\lambda_1=\mu_{1+\eps}=1$ and $\lambda$ is dominant we deduce that either $\lambda_1=\dots=\lambda_p=1$ for some $p$ and all other $\lambda_i=0$, or (only in the $D_n$ case) $\lambda_1=\dots=\lambda_{n-1}=1=-\lambda_n$.  These weights~$\lambda$ are not in $\mathcal{I}^{\text{Table}}_{\lie{g}}$.
    Note of course that $p$ and $m$ are not independent; for example for $m\leq n-3$ one has $m=p$.
  \item Fourth, $\mu=\bigl(\sum_{i=1}^{n-3}e_{i+2}\bigr)-e_n$ for $D_n$ with even~$n$.  This weight is not of the form of Lemma~\ref{lem:define-mu} because one would need $-1=\lambda_{n-2}-\eta\geq-\eta\geq-1$ hence $\eta=1$ and $\lambda_{n-2}=0$, so $\lambda_{n-1}=\lambda_n=0$ so $1=\eta\equiv\lambda_{n-1}+\lambda_n=0\pmod{2}$.
    \qedhere
  \end{itemize}
\end{proof}

\appendix

\section{Tables of values computed by hand}
\label{sec:by_hand}

To finish the proof, it remains to compute the $w_0$-signatures of the 167 representations listed in the two tables below. (Actually, by taking advantage of the outer automorphisms, we can reduce the number of computations to 138.) We did the computations with the algorithm described in Section~\ref{sec:algo} and implemented in Appendix~\ref{sec:lie_code}. The results are listed in the following two tables.

In both tables, each line gives the $w_0$-signature of a representation with highest weight~$\lambda$, encoded in two different ways : first by the coordinates of~$\lambda$ in the $(\varpi_1, \ldots, \varpi_r)$ basis (sometimes called the ``Dynkin coefficients'' of the representation), and also by the coordinates of~$\lambda$ in the $(e_1, \ldots, e_n)$ basis (called $\lambda_1, \ldots, \lambda_n$ in Section~\ref{sec:inductionrule}). For reference, the dimension of the representation is also given.

For each Lie algebra~$\mathfrak{g}$, the representations are usually listed in the lexicographic order of their Dynkin coefficients. However, representations that map to each other under some outer automorphism of~$\mathfrak{g}$, and consequently automatically have the same dimension and $w_0$-signature, are listed together. In this case, only one representative of the orbit (the first one in lexicographic order) is given a full entry; the other ones are written immediately below it, in gray and with ``same as above'' comment over the last two columns.

The representations covered in Table~\ref{pure_by_hand} are all the supposedly pure representations of exceptional simple Lie algebras (whose list can be immediately read off Table~\ref{mui-ni-table}), plus the representation of $C_3$ with highest weight $2\varpi_3$ and of $C_4$ with highest weight $2\varpi_4$.

\begin{table}[h]
  \caption{\label{root_lattice_congruences}Linear congruences that the coordinates $c_1, \ldots, c_r$ must satisfy in order for the weight $\lambda = \sum_{i=1}^r c_i \varpi_i$ to lie in the root lattice $\Lambda$.}
  \centering\bigskip
\begin{tabular}{c>{\hspace{1em}}l}
$\mathfrak{g}$ & Condition \\[1ex] \hline
$A_n$ & $\sum_{i=1}^n i c_i \equiv 0 \pmod{n+1}$ \\
$B_n$ & $c_n \equiv 0 \pmod{2}$ \\[1ex]
$C_n$ & $\sum_{i=0}^{\lfloor \frac{n-1}{2} \rfloor} c_{2i+1} \equiv 0 \pmod{2}$ \\[2ex]
$D_{2n}$ & $\begin{cases} \left(\sum_{i=0}^{n-2} c_{2i+1}\right) + c_{2n-1} \equiv 0 \pmod{2} \\
                          \left(\sum_{i=0}^{n-2} c_{2i+1}\right) + c_{2n} \equiv 0 \pmod{2} \end{cases}$ \\[3ex]
$D_{2n+1}$ & $2 \left(\sum_{i=0}^{n-1} c_{2i+1}\right) + c_{2n} - c_{2n+1} \equiv 0 \pmod{4}$ \\
$E_6$ & $c_1 - c_3 + c_5 - c_6 \equiv 0 \pmod{3}$ \\
$E_7$ & $c_2 + c_5 + c_7 \equiv 0 \pmod{2}$ \\
$E_8$, $F_4$, $G_2$ & $0 = 0$ (\ie the root lattice is the whole weight lattice) \\ \hline
\end{tabular}
\end{table}

\begin{table}
\caption{\label{pure_by_hand}Representations whose $w_0$-purity is checked by explicit computation.}
\centering\bigskip
\begin{tabular}{l>{\hspace{1em}}c>{\hspace{1em}}c>{\hspace{1em}}r>{\hspace{1em}}c}
$\mathfrak{g}$ & \multicolumn{1}{>{\hspace{1em}}c}{\parbox{2.2cm}{\centering Coordinates of $\lambda$\\ in $(\varpi_i)$ basis}} & \multicolumn{1}{>{\hspace{1em}}c}{\parbox{2.2cm}{\centering Coordinates of $\lambda$\\ in $(e_i)$ basis}} & $\dim(V_\lambda)$ & \multicolumn{1}{>{\hspace{1em}}c}{\parbox{1.7cm}{\centering $w_0$-signature\\ of~$V_\lambda$}} \\ \hline

\multirow{4}{1em}{$E_7$}	& $(0,0,0,0,0,0,2)$	& $(0,0,0,0,0,2,-1,1)$	& 1463	& $(0,21)$ \\*
	& $(0,0,0,0,0,1,0)$	& $(0,0,0,0,1,1,-1,1)$	& 1539	& $(27,0)$ \\
	& $(1,0,0,0,0,0,0)$	& $(0,0,0,0,0,0,-1,1)$	& 133	& $(0,7)$ \\*
	& $(2,0,0,0,0,0,0)$	& $(0,0,0,0,0,0,-2,2)$	& 7371	& $(63,0)$ \\ \hline

\multirow{3}{1em}{$E_8$}	& $(0,0,0,0,0,0,0,1)$	& $(0,0,0,0,0,0,1,1)$	& 248	& $(0,8)$ \\*
	& $(0,0,0,0,0,0,0,2)$	& $(0,0,0,0,0,0,2,2)$	& 27000	& $(120,0)$ \\*
	& $(1,0,0,0,0,0,0,0)$	& $(0,0,0,0,0,0,0,2)$	& 3875	& $(35,0)$ \\ \hline

\multirow{4}{1em}{$F_4$}	& $(0,0,0,1)$	& $(1,0,0,0)$	& 26	& $(2,0)$ \\*
	& $(0,0,0,2)$	& $(2,0,0,0)$	& 324	& $(12,0)$ \\
	& $(1,0,0,0)$	& $(1,1,0,0)$	& 52	& $(0,4)$ \\*
	& $(2,0,0,0)$	& $(2,2,0,0)$	& 1053	& $(21,0)$ \\ \hline

\multirow{4}{1em}{$G_2$}	& $(0,1)$	& $(-1,-1,2)$	& 14	& $(0,2)$ \\*
	& $(0,2)$	& $(-2,-2,4)$	& 77	& $(5,0)$ \\
	& $(1,0)$	& $(0,-1,1)$	& 7	& $(0,1)$ \\*
	& $(2,0)$	& $(0,-2,2)$	& 27	& $(3,0)$ \\ \hline
	
$C_3$	& $(0,0,2)$	& $(2,2,2)$	& 84	& $(0,4)$ \\ \hline

$C_4$	& $(0,0,0,2)$	& $(2,2,2,2)$	& 594	& $(10,0)$ \\ \hline
\end{tabular}
\end{table}

The representations covered in Table~\ref{mixed_by_hand} are those that form a basis of $\mathcal{I}^{\text{Table}}$ as an ideal of~$\mathcal{M}$, for the algebras $A_{\leq 5}$, $B_{\leq 4}$, $C_{\leq 5}$, $D_{\leq 6}$ and the exceptional algebras. Recall that for any $\mathfrak{g}$, $\mathcal{M}$ is the set of vectors $\lambda = \sum_{i=1}^r c_i \varpi_i$ such that:
\begin{enumerate}
\item $\forall i = 1, \ldots r,\quad c_i \in \mathbb{Z}$ (this is the definition of the weight lattice);
\item $\forall i = 1, \ldots r,\quad c_i \geq 0$ (this is the dominance condition);
\item $\lambda$ is in the root lattice~$\Lambda$, which is a sublattice of the weight lattice; so this condition is equivalent to a system of linear congruences. These congruences are well-known, and given in Table~\ref{root_lattice_congruences}.
\end{enumerate}

Now $\mathcal{I}^{\text{Table}}$ is obtained from $\mathcal{M}$ by removing all the weights of the form $k p_i \varpi_i$ with $k \leq m_i$. Given all of this, computation of its basis becomes straightforward.

\clearpage
\begin{small}
\begin{longtable}{l>{\hspace{1em}}c>{\hspace{1em}}c>{\hspace{1em}}r>{\hspace{1em}}c}
\caption{\label{mixed_by_hand}Representations whose $w_0$-mixedness is checked by explicit computation.} \\
$\mathfrak{g}$ & \multicolumn{1}{>{\hspace{1em}}c}{\parbox{2.2cm}{\centering Coordinates of $\lambda$\\ in $(\varpi_i)$ basis}} & \multicolumn{1}{>{\hspace{1em}}c}{\parbox{2.2cm}{\centering Coordinates of $\lambda$\\ in $(e_i)$ basis}} & $\dim(V_\lambda)$ & \multicolumn{1}{>{\hspace{1em}}c}{\parbox{1.7cm}{\centering $w_0$-signature\\ of~$V_\lambda$}} \\ \hline
\endfirsthead
\caption[]{Representations whose $w_0$-mixedness is checked by explicit computation (continued).} \\
$\mathfrak{g}$ & \multicolumn{1}{>{\hspace{1em}}c}{\parbox{2.2cm}{\centering Coordinates of $\lambda$\\ in $(\varpi_i)$ basis}} & \multicolumn{1}{>{\hspace{1em}}c}{\parbox{2.2cm}{\centering Coordinates of $\lambda$\\ in $(e_i)$ basis}} & $\dim(V_\lambda)$ & \multicolumn{1}{>{\hspace{1em}}c}{\parbox{1.7cm}{\centering $w_0$-signature\\ of~$V_\lambda$}} \\ \hline
\endhead
$A_2$	& $(1,1)$	& $(1,0,-1)$	& 8	& $(1,1)$ \\* \hline

\topic{$A_3$}	& $(0,1,2)$	& $(1,1,0,-2)$	& 45	& $(1,2)$ \\*
\lasttopic	& \faded $(2,1,0)$	& \faded $(2,0,-1,-1)$	& \multicolumn{2}{r}{\faded same as above \quad \qquad ~} \\*
\lasttopic	& $(1,0,1)$	& $(1,0,0,-1)$	& 15	& $(1,2)$ \\ \hline

\topic{$A_4$}	& $(0,0,1,3)$	& $(1,1,1,0,-3)$	& 224	& $(2,2)$ \\*
\lasttopic	& \faded $(3,1,0,0)$	& \faded $(3,0,-1,-1,-1)$	& \multicolumn{2}{r}{\faded same as above \quad \qquad ~} \\
\lasttopic	& $(0,0,2,1)$	& $(1,1,1,-1,-2)$	& 175	& $(3,2)$ \\*
\lasttopic	& \faded $(1,2,0,0)$	& \faded $(2,1,-1,-1,-1)$	& \multicolumn{2}{r}{\faded same as above \quad \qquad ~} \\
\lasttopic	& $(0,0,5,0)$	& $(2,2,2,-3,-3)$	& 1176	& $(2,4)$ \\*
\lasttopic	& \faded $(0,5,0,0)$	& \faded $(3,3,-2,-2,-2)$	& \multicolumn{2}{r}{\faded same as above \quad \qquad ~} \\ 
\lasttopic	& $(0,1,0,2)$	& $(1,1,0,0,-2)$	& 126	& $(2,4)$ \\*
\lasttopic	& \faded $(2,0,1,0)$	& \faded $(2,0,0,-1,-1)$	& \multicolumn{2}{r}{\faded same as above \quad \qquad ~} \\ 
\lasttopic	& $(0,1,1,0)$	& $(1,1,0,-1,-1)$	& 75	& $(3,2)$ \\
\lasttopic	& $(0,3,0,1)$	& $(2,2,-1,-1,-2)$	& 700	& $(4,6)$ \\*
\lasttopic	& \faded $(1,0,3,0)$	& \faded $(2,1,1,-2,-2)$	& \multicolumn{2}{r}{\faded same as above \quad \qquad ~} \\*
\lasttopic	& $(1,0,0,1)$	& $(1,0,0,0,-1)$	& 24	& $(2,2)$ \\ \hline

\topic{$A_5$}	& $(0,0,0,1,4)$	& $(1,1,1,1,0,-4)$	& 1050	& $(3,2)$ \\* 
\lasttopic	& \faded $(4,1,0,0,0)$	& \faded $(4,0,-1,-1,-1,-1)$	& \multicolumn{2}{r}{\faded same as above \quad \qquad ~} \\
\lasttopic	& $(0,0,0,2,2)$	& $(1,1,1,1,-1,-3)$	& 1134	& $(3,6)$ \\*
\lasttopic	& \faded $(2,2,0,0,0)$	& \faded $(3,1,-1,-1,-1,-1)$	& \multicolumn{2}{r}{\faded same as above \quad \qquad ~} \\
\lasttopic	& $(0,0,0,3,0)$	& $(1,1,1,1,-2,-2)$	& 490	& $(4,1)$ \\*
\lasttopic	& \faded $(0,3,0,0,0)$	& \faded $(2,2,-1,-1,-1,-1)$	& \multicolumn{2}{r}{\faded same as above \quad \qquad ~} \\ 
\lasttopic	& $(0,0,1,0,3)$	& $(1,1,1,0,0,-3)$	& 840	& $(6,4)$ \\*
\lasttopic	& \faded $(3,0,1,0,0)$	& \faded $(3,0,0,-1,-1,-1)$	& \multicolumn{2}{r}{\faded same as above \quad \qquad ~} \\ 
\lasttopic	& $(0,0,1,1,1)$	& $(1,1,1,0,-1,-2)$	& 896	& $(8,8)$ \\*
\lasttopic	& \faded $(1,1,1,0,0)$	& \faded $(2,1,0,-1,-1,-1)$	& \multicolumn{2}{r}{\faded same as above \quad \qquad ~} \\ 
\lasttopic	& $(0,0,2,0,0)$	& $(1,1,1,-1,-1,-1)$	& 175	& $(1,4)$ \\ 
\lasttopic	& $(0,1,0,0,2)$	& $(1,1,0,0,0,-2)$	& 280	& $(4,6)$ \\*
\lasttopic	& \faded $(2,0,0,1,0)$	& \faded $(2,0,0,0,-1,-1)$	& \multicolumn{2}{r}{\faded same as above \quad \qquad ~} \\ 
\lasttopic	& $(0,1,0,1,0)$	& $(1,1,0,0,-1,-1)$	& 189	& $(6,3)$ \\ 
\lasttopic	& $(0,2,1,0,1)$	& $(2,2,0,-1,-1,-2)$	& 5670	& $(21,24)$ \\* 
\lasttopic	& \faded $(1,0,1,2,0)$	& \faded $(2,1,1,0,-2,-2)$	& \multicolumn{2}{r}{\faded same as above \quad \qquad ~} \\*
\lasttopic	& $(1,0,0,0,1)$	& $(1,0,0,0,0,-1)$	& 35	& $(2,3)$ \\ \hline

$B_2$	& $(1,2)$	& $(2,1)$	& 35	& $(1,2)$ \\ \hline

\topic{$B_3$}	& $(0,0,4)$	& $(2,2,2)$	& 294	& $(7,1)$ \\*
\lasttopic	& $(0,1,2)$	& $(2,2,1)$	& 378	& $(3,9)$ \\ 
\lasttopic	& $(0,3,0)$	& $(3,3,0)$	& 825	& $(1,16)$ \\ 
\lasttopic	& $(1,0,2)$	& $(2,1,1)$	& 189	& $(3,6)$ \\*
\lasttopic	& $(1,1,0)$	& $(2,1,0)$	& 105	& $(3,2)$ \\ \hline

\topic{$B_4$}	& $(0,0,0,4)$	& $(2,2,2,2)$	& 2772	& $(24,4)$ \\*
\lasttopic	& $(0,0,1,2)$	& $(2,2,2,1)$	& 4158	& $(18,28)$ \\ 
\lasttopic	& $(0,0,2,0)$	& $(2,2,2,0)$	& 1980	& $(24,4)$ \\ 
\lasttopic	& $(0,1,0,2)$	& $(2,2,1,1)$	& 2772	& $(12,32)$ \\ 
\lasttopic	& $(0,1,1,0)$	& $(2,2,1,0)$	& 1650	& $(18,8)$ \\ 
\lasttopic	& $(0,3,0,0)$	& $(3,3,0,0)$	& 4004	& $(4,40)$ \\ 
\lasttopic	& $(1,0,0,2)$	& $(2,1,1,1)$	& 924	& $(12,8)$ \\ 
\lasttopic	& $(1,0,1,0)$	& $(2,1,1,0)$	& 594	& $(6,12)$ \\*
\lasttopic	& $(1,1,0,0)$	& $(2,1,0,0)$	& 231	& $(3,4)$ \\ \hline

\topic{$C_3$}	& $(0,0,4)$	& $(4,4,4)$	& 1001	& $(10,1)$ \\*
\lasttopic	& $(0,1,2)$	& $(3,3,2)$	& 594	& $(3,9)$ \\ 
\lasttopic	& $(0,3,0)$	& $(3,3,0)$	& 385	& $(10,1)$ \\ 
\lasttopic	& $(1,0,1)$	& $(2,1,1)$	& 70	& $(1,3)$ \\*
\lasttopic	& $(2,1,0)$	& $(3,1,0)$	& 189	& $(3,6)$ \\ \hline

\topic{$C_4$}	& $(0,0,0,3)$	& $(3,3,3,3)$	& 4719	& $(19,4)$ \\*
\lasttopic	& $(0,0,2,0)$	& $(2,2,2,0)$	& 825	& $(1,16)$ \\ 
\lasttopic	& $(0,1,0,1)$	& $(2,2,1,1)$	& 792	& $(12,4)$ \\ 
\lasttopic	& $(0,3,0,0)$	& $(3,3,0,0)$	& 2184	& $(28,4)$ \\ 
\lasttopic	& $(1,0,1,0)$	& $(2,1,1,0)$	& 315	& $(3,8)$ \\ 
\lasttopic	& $(2,0,0,1)$	& $(3,1,1,1)$	& 1155	& $(7,12)$ \\*
\lasttopic	& $(2,1,0,0)$	& $(3,1,0,0)$	& 594	& $(6,12)$ \\ \hline

\topic{$C_5$}	& $(0,0,0,0,2)$	& $(2,2,2,2,2)$	& 4719	& $(1,26)$ \\*
\lasttopic	& $(0,0,0,2,0)$	& $(2,2,2,2,0)$	& 7865	& $(50,5)$ \\ 
\lasttopic	& $(0,0,1,0,1)$	& $(2,2,2,1,1)$	& 8580	& $(20,40)$ \\ 
\lasttopic	& $(0,0,2,0,0)$	& $(2,2,2,0,0)$	& 4004	& $(4,40)$ \\ 
\lasttopic	& $(0,1,0,1,0)$	& $(2,2,1,1,0)$	& 5005	& $(40,15)$ \\ 
\lasttopic	& $(0,3,0,0,0)$	& $(3,3,0,0,0)$	& 8250	& $(60,10)$ \\ 
\lasttopic	& $(1,0,0,0,1)$	& $(2,1,1,1,1)$	& 1155	& $(5,10)$ \\ 
\lasttopic	& $(1,0,1,0,0)$	& $(2,1,1,0,0)$	& 891	& $(6,15)$ \\ 
\lasttopic	& $(2,0,0,1,0)$	& $(3,1,1,1,0)$	& 6864	& $(24,40)$ \\*
\lasttopic	& $(2,1,0,0,0)$	& $(3,1,0,0,0)$	& 1430	& $(10,20)$ \\ \hline

\topic{$D_4$}	& $(0,0,2,2)$	& $(2,2,2,0)$	& 840	& $(12,4)$ \\*
\lasttopic	& \faded $(2,0,0,2)$	& \faded $(3,1,1,1)$	& \multicolumn{2}{r}{\faded same as above \quad \qquad ~} \\*
\lasttopic	& \faded $(2,0,2,0)$	& \faded $(3,1,1,-1)$	& \multicolumn{2}{r}{\faded same as above \quad \qquad ~} \\ 
\lasttopic	& $(0,1,0,2)$	& $(2,2,1,1)$	& 567	& $(3,12)$ \\*
\lasttopic	& \faded $(0,1,2,0)$	& \faded $(2,2,1,-1)$	& \multicolumn{2}{r}{\faded same as above \quad \qquad ~} \\* 
\lasttopic	& \faded $(2,1,0,0)$	& \faded $(3,1,0,0)$	& \multicolumn{2}{r}{\faded same as above \quad \qquad ~} \\
\lasttopic	& $(0,3,0,0)$	& $(3,3,0,0)$	& 1925	& $(1,28)$ \\*
\lasttopic	& $(1,0,1,1)$	& $(2,1,1,0)$	& 350	& $(6,8)$ \\ \hline

\topic{$D_5$}	& $(0,0,0,0,4)$	& $(2,2,2,2,2)$	& 2772	& $(12,4)$ \\*
\lasttopic	& \faded $(0,0,0,4,0)$	& \faded $(2,2,2,2,-2)$	& \multicolumn{2}{r}{\faded same as above \quad \qquad ~} \\ 
\lasttopic	& $(0,0,0,1,1)$	& $(1,1,1,1,0)$	& 210	& $(6,4)$ \\ 
\lasttopic	& $(0,0,1,0,2)$	& $(2,2,2,1,1)$	& 6930	& $(24,36)$ \\*
\lasttopic	& \faded $(0,0,1,2,0)$	& \faded $(2,2,2,1,-1)$	& \multicolumn{2}{r}{\faded same as above \quad \qquad ~} \\ 
\lasttopic	& $(0,0,2,0,0)$	& $(2,2,2,0,0)$	& 4125	& $(33,12)$ \\ 
\lasttopic	& $(0,1,0,0,0)$	& $(1,1,0,0,0)$	& 45	& $(1,4)$ \\ 
\lasttopic	& $(1,0,0,0,2)$	& $(2,1,1,1,1)$	& 1050	& $(12,8)$ \\*
\lasttopic	& \faded $(1,0,0,2,0)$	& \faded $(2,1,1,1,-1)$	& \multicolumn{2}{r}{\faded same as above \quad \qquad ~} \\*
\lasttopic	& $(1,0,1,0,0)$	& $(2,1,1,0,0)$	& 945	& $(9,16)$ \\ \hline

\topic{$D_6$}	& $(0,0,0,0,0,4)$	& $(2,2,2,2,2,2)$	& 28314	& $(60,10)$ \\*
\lasttopic	& \faded $(0,0,0,0,4,0)$	& \faded $(2,2,2,2,2,-2)$	& \multicolumn{2}{r}{\faded same as above \quad \qquad ~} \\ 
\lasttopic	& $(0,0,0,0,2,2)$	& $(2,2,2,2,2,0)$	& 99099	& $(171,96)$ \\ 
\lasttopic	& $(0,0,0,1,0,2)$	& $(2,2,2,2,1,1)$	& 84942	& $(90,180)$ \\*
\lasttopic	& \faded $(0,0,0,1,2,0)$	& \faded $(2,2,2,2,1,-1)$	& \multicolumn{2}{r}{\faded same as above \quad \qquad ~} \\ 
\lasttopic	& $(0,0,0,2,0,0)$	& $(2,2,2,2,0,0)$	& 55055	& $(175,40)$ \\ 
\lasttopic	& $(0,0,1,0,1,1)$	& $(2,2,2,1,1,0)$	& 90090	& $(180,210)$ \\ 
\lasttopic	& $(0,0,2,0,0,0)$	& $(2,2,2,0,0,0)$	& 14014	& $(74,20)$ \\ 
\lasttopic	& $(0,1,0,0,0,2)$	& $(2,2,1,1,1,1)$	& 21450	& $(90,40)$ \\*
\lasttopic	& \faded $(0,1,0,0,2,0)$	& \faded $(2,2,1,1,1,-1)$	& \multicolumn{2}{r}{\faded same as above \quad \qquad ~} \\ 
\lasttopic	& $(0,1,0,1,0,0)$	& $(2,2,1,1,0,0)$	& 21021	& $(45,120)$ \\ 
\lasttopic	& $(0,3,0,0,0,0)$	& $(3,3,0,0,0,0)$	& 23100	& $(10,110)$ \\ 
\lasttopic	& $(1,0,0,0,1,1)$	& $(2,1,1,1,1,0)$	& 8085	& $(45,40)$ \\ 
\lasttopic	& $(1,0,1,0,0,0)$	& $(2,1,1,0,0,0)$	& 2079	& $(15,24)$ \\ 
\lasttopic	& $(2,0,0,0,0,2)$	& $(3,1,1,1,1,1)$	& 27027	& $(45,90)$ \\*
\lasttopic	& \faded $(2,0,0,0,2,0)$	& \faded $(3,1,1,1,1,-1)$	& \multicolumn{2}{r}{\faded same as above \quad \qquad ~} \\ 
\lasttopic	& $(2,0,0,1,0,0)$	& $(3,1,1,1,0,0)$	& 27456	& $(120,56)$ \\*
\lasttopic	& $(2,1,0,0,0,0)$	& $(3,1,0,0,0,0)$	& 2860	& $(10,30)$ \\ \hline

\topic{$E_6$}	& $(0,0,0,0,0,3)$	& $(0,0,0,0,3,-1,-1,1)$	& 3003	& $(16,8)$ \\*
\lasttopic	& \faded $(3,0,0,0,0,0)$	& \faded $(0,0,0,0,0,-2,-2,2)$	& \multicolumn{2}{r}{\faded same as above \quad \qquad ~} \\ 
\lasttopic	& $(0,0,0,0,1,1)$	& $(0,0,0,1,2,-1,-1,1)$	& 5824	& $(32,32)$ \\*
\lasttopic	& \faded $(1,0,1,0,0,0)$	& \faded $\left(-\frac{1}{2},\frac{1}{2},\frac{1}{2},\frac{1}{2},\frac{1}{2},-\frac{3}{2},-\frac{3}{2},\frac{3}{2}\right)$	& \multicolumn{2}{r}{\faded same as above \quad \qquad ~} \\ 
\lasttopic	& $(0,0,0,0,3,0)$	& $(0,0,0,3,3,-2,-2,2)$	& 1559376	& $(980,1072)$ \\*
\lasttopic	& \faded $(0,0,3,0,0,0)$	& \faded $\left(-\frac{3}{2},\frac{3}{2},\frac{3}{2},\frac{3}{2},\frac{3}{2},-\frac{5}{2},-\frac{5}{2},\frac{5}{2}\right)$	& \multicolumn{2}{r}{\faded same as above \quad \qquad ~} \\ 
\lasttopic	& $(0,0,0,1,0,0)$	& $(0,0,1,1,1,-1,-1,1)$	& 2925	& $(17,28)$ \\ 
\lasttopic	& $(0,0,1,0,0,2)$	& $\left(-\frac{1}{2},\frac{1}{2},\frac{1}{2},\frac{1}{2},\frac{5}{2},-\frac{3}{2},-\frac{3}{2},\frac{3}{2}\right)$	& 78975	& $(168,192)$ \\*
\lasttopic	& \faded $(2,0,0,0,1,0)$	& \faded $(0,0,0,1,1,-2,-2,2)$	& \multicolumn{2}{r}{\faded same as above \quad \qquad ~} \\ 
\lasttopic	& $(0,0,1,0,1,0)$	& $\left(-\frac{1}{2},\frac{1}{2},\frac{1}{2},\frac{3}{2},\frac{3}{2},-\frac{3}{2},-\frac{3}{2},\frac{3}{2}\right)$	& 70070	& $(198,176)$ \\ 
\lasttopic	& $(0,0,2,0,0,1)$	& $(-1,1,1,1,2,-2,-2,2)$	& 600600	& $(758,712)$ \\*
\lasttopic	& \faded $(1,0,0,0,2,0)$	& \faded $(0,0,0,2,2,-2,-2,2)$	& \multicolumn{2}{r}{\faded same as above \quad \qquad ~} \\
\lasttopic	& $(0,1,0,0,0,0)$	& $\left(\frac{1}{2},\frac{1}{2},\frac{1}{2},\frac{1}{2},\frac{1}{2},-\frac{1}{2},-\frac{1}{2},\frac{1}{2}\right)$	& 78	& $(2,4)$ \\*
\lasttopic	& $(1,0,0,0,0,1)$	& $(0,0,0,0,1,-1,-1,1)$	& 650	& $(12,8)$ \\  \hline

\topic{$E_7$}	& $(0,0,0,0,0,0,4)$	& $(0,0,0,0,0,4,-2,2)$	& 293930	& $(315,105)$ \\*
\lasttopic	& $(0,0,0,0,0,1,2)$	& $(0,0,0,0,1,3,-2,2)$	& 915705	& $(630,945)$ \\ 
\lasttopic	& $(0,0,0,0,0,2,0)$	& $(0,0,0,0,2,2,-2,2)$	& 617253	& $(861,336)$ \\ 
\lasttopic	& $(0,0,0,0,1,0,1)$	& $(0,0,0,1,1,2,-2,2)$	& 980343	& $(945,1134)$ \\ 
\lasttopic	& $(0,0,0,0,2,0,0)$	& $(0,0,0,2,2,2,-3,3)$	& 109120648	& $(27006,31318)$ \\ 
\lasttopic	& $(0,0,0,1,0,0,0)$	& $(0,0,1,1,1,1,-2,2)$	& 365750	& $(665,315)$ \\ 
\lasttopic	& $(0,0,1,0,0,0,0)$	& $\left(-\frac{1}{2},\frac{1}{2},\frac{1}{2},\frac{1}{2},\frac{1}{2},\frac{1}{2},-\frac{3}{2},\frac{3}{2}\right)$	& 8645	& $(21,56)$ \\ 
\lasttopic	& $(0,1,0,0,0,0,1)$	& $\left(\frac{1}{2},\frac{1}{2},\frac{1}{2},\frac{1}{2},\frac{1}{2},\frac{3}{2},-\frac{3}{2},\frac{3}{2}\right)$	& 40755	& $(120,105)$ \\ 
\lasttopic	& $(0,1,0,0,1,0,0)$	& $\left(\frac{1}{2},\frac{1}{2},\frac{1}{2},\frac{3}{2},\frac{3}{2},\frac{3}{2},-\frac{5}{2},\frac{5}{2}\right)$	& 11316305	& $(6363,5978)$ \\ 
\lasttopic	& $(0,2,0,0,0,0,0)$	& $(1,1,1,1,1,1,-2,2)$	& 253935	& $(189,414)$ \\ 
\lasttopic	& $(1,0,0,0,0,0,2)$	& $(0,0,0,0,0,2,-2,2)$	& 150822	& $(315,189)$ \\ 
\lasttopic	& $(1,0,0,0,0,1,0)$	& $(0,0,0,0,1,1,-2,2)$	& 152152	& $(210,378)$ \\*
\lasttopic	& $(3,0,0,0,0,0,0)$	& $(0,0,0,0,0,0,-3,3)$	& 238602	& $(105,399)$ \\ \hline

\topic{$E_8$}	& $(0,0,0,0,0,0,0,3)$	& $(0,0,0,0,0,0,3,3)$	& 1763125	& $(525,1240)$ \\*
\lasttopic	& $(0,0,0,0,0,0,1,0)$	& $(0,0,0,0,0,1,1,2)$	& 30380	& $(28,112)$ \\ 
\lasttopic	& $(0,0,0,0,0,1,0,0)$	& $(0,0,0,0,1,1,1,3)$	& 2450240	& $(1896,1064)$ \\ 
\lasttopic	& $(0,0,0,0,1,0,0,0)$	& $(0,0,0,1,1,1,1,4)$	& 146325270	& $(33782,30688)$ \\ 
\lasttopic	& $(0,0,0,1,0,0,0,0)$	& $(0,0,1,1,1,1,1,5)$	& 6899079264	& $(657608,699496)$ \\ 
\lasttopic	& $(0,0,1,0,0,0,0,0)$	& $\left(-\frac{1}{2},\frac{1}{2},\frac{1}{2},\frac{1}{2},\frac{1}{2},\frac{1}{2},\frac{1}{2},\frac{7}{2}\right)$	& 6696000	& $(2520,3480)$ \\ 
\lasttopic	& $(0,1,0,0,0,0,0,0)$	& $\left(\frac{1}{2},\frac{1}{2},\frac{1}{2},\frac{1}{2},\frac{1}{2},\frac{1}{2},\frac{1}{2},\frac{5}{2}\right)$	& 147250	& $(210,160)$ \\ 
\lasttopic	& $(1,0,0,0,0,0,0,1)$	& $(0,0,0,0,0,0,1,3)$	& 779247	& $(567,840)$ \\*
\lasttopic	& $(2,0,0,0,0,0,0,0)$	& $(0,0,0,0,0,0,0,4)$	& 4881384	& $(2808,1296)$ \\ \hline

\topic{$F_4$}	& $(0,0,0,3)$	& $(3,0,0,0)$	& 2652	& $(28,8)$ \\*
\lasttopic	& $(0,0,1,0)$	& $\left(\frac{3}{2},\frac{1}{2},\frac{1}{2},\frac{1}{2}\right)$	& 273	& $(1,8)$ \\ 
\lasttopic	& $(0,1,0,0)$	& $(2,1,1,0)$	& 1274	& $(6,20)$ \\ 
\lasttopic	& $(1,0,0,1)$	& $(2,1,0,0)$	& 1053	& $(9,12)$ \\*
\lasttopic	& $(3,0,0,0)$	& $(3,3,0,0)$	& 12376	& $(16,72)$ \\ \hline

\topic{$G_2$}	& $(0,3)$	& $(-3,-3,6)$	& 273	& $(1,8)$ \\*
\lasttopic	& $(1,1)$	& $(-1,-2,3)$	& 64	& $(2,2)$ \\*
\lasttopic	& $(3,0)$	& $(0,-3,3)$	& 77	& $(1,4)$ \\ \hline
\end{longtable}
\end{small}

\section{Implementation in LiE of the $w_0$-signature algorithm}
\label{sec:lie_code}

Here is the LiE code that was used to compute Tables \ref{pure_by_hand} and~\ref{mixed_by_hand}. Here is an example to illustrate the syntax of the functions defined in this code: the last line of Table~\ref{mixed_by_hand} can be computed by executing the command \texttt{w0G2([3,0])}.

Some explanation is needed about the big list of matrices that appears in this code. In order to define a branching rule from a Lie algebra $\mathfrak{g}$ to a Lie subalgebra $\mathfrak{s}$, the LiE software needs as input a ``restriction matrix''~$M$, which is defined by the identity
\[M \transpose{\Omega_{\mathfrak{s}}} = \transpose{\Omega_{\mathfrak{g}}},\]
where $\Omega_{\mathfrak{g}}$ (resp. $\Omega_{\mathfrak{s}}$) is the matrix whose columns are the vectors of the ``basis of fundamental weights'' of the Cartan subalgebra of~$\mathfrak{g}$ (resp. of~$\mathfrak{s}$). (The transpose is there because the LiE software happens to represent all vectors as row matrices, rather than column matrices as usual.)

What we call here a ``basis of fundamental weights'' is actually an arbitrary extension of the family of fundamental weights to a basis of the Cartan subalgebra. For the semisimple algebra $\mathfrak{g}$, the fundamental weights form a basis by themselves. The algebra~$\mathfrak{s}$, on the other hand, is in our case isomorphic to $\lie{sl}(2,\CC)^s \times \CC^t$. So it has only $s$ fundamental weights, namely $\frac{1}{2}\alpha_1, \ldots, \frac{1}{2}\alpha_s$ (where the $\alpha_i$ are defined in the proof of Proposition~\ref{sl2_copies} and listed in Table~\ref{tab:orthoroots}); we then choose $t$ more vectors that complete this family to a basis.

Note that reordering the columns of~$\Omega_\mathfrak{s}$ (which corresponds to reordering the columns of~$M$) does not change the result. We have chosen to put the true fundamental weights first, in the order given in Table~\ref{tab:orthoroots}; and the ``filler'' vectors last. (The order of the columns of~$\Omega_\mathfrak{g}$, on the other hand, is fixed by LiE to be the Bourbaki ordering.)

~

~

\begin{verbatim}
##########
# Computes the w0-sign of a representation of A1 with highest weight x.
##########
a1_sign(int x) = 
  if x%2 == 0
    then (-1)^(x/2)
    else 0
  fi

##########
# Computes the w0-sign of a representation of T1 (abelian Lie group of rank 1)
# with "highest weight" (or, rather, highest charge) x.
##########
t_sign(int x) =
  if x == 0
    then 1
    else 0
  fi

##########
# Computes the w0-sign of a representation of (A1)^l x (T1)^(n-l) with highest
# weight given by v. Here n is given by the length of the vector v, and l is passed
# as a separate argument.
##########
tot_sign(vec v; int l) =
  {loc res=1;
   for i=1 to l do
     res=res*a1_sign(v[i])
   od;
   for i=l+1 to size(v) do
     res=res*t_sign(v[i]) od;
   res}

##########
# The LiE software encodes a non-irreducible representation of (A1)^l x (T1)^(n-l)
# by a polynomial p, in which each monomial of the form exp(v) stands for a direct
# summand with highest weight v. This function computes the w0-signature of such a
# representation.
# Here n is given by the lengths of vectors occuring as exponents in p, and l is
# passed as a separate argument.
##########
count_signature(pol p; int l) =
  {loc res=[0,0];
   for i=1 to length(p) do
     if tot_sign(expon(p,i), l) == +1
       then res[1]+=coef(p,i)
       else if tot_sign(expon(p,i), l) == -1
         then res[2]+=coef(p,i)
       fi
     fi
   od;
   res}

##########
# Declaration of restriction matrices
##########
\end{verbatim}
\setlength{\columnseprule}{1pt}
\begin{multicols}{2}
\begin{verbatim}
resA1 = [[1]]
resA2 = [[1,1],
         [1,-1]]
resA3 = [[1,0,1],
         [1,1,0],
         [1,0,-1]]
resA4 = [[1,0,3,-2],
         [1,1,1,1],
         [1,1,-1,-1],
         [1,0,-3,2]]
resA5 = [[1,0,0,4,-2],
         [1,1,0,2,2],
         [1,1,1,0,0],
         [1,1,0,-2,-2],
         [1,0,0,-4,2]]
resA6 = [[1,0,0,5,-2,-2],
         [1,1,0,3,3,-4],
         [1,1,1,1,1,1],
         [1,1,1,-1,-1,-1],
         [1,1,0,-3,-3,4],
         [1,0,0,-5,2,2]]
resA7 = [[1,0,0,0,6,-2,-2],
         [1,1,0,0,4,4,-4],
         [1,1,1,0,2,2,2],
         [1,1,1,1,0,0,0],
         [1,1,1,0,-2,-2,-2],
         [1,1,0,0,-4,-4,4],
         [1,0,0,0,-6,2,2]]
resA8 = [[1,0,0,0,7,-2,-2,-2],
         [1,1,0,0,5,5,-4,-4],
         [1,1,1,0,3,3,3,-6],
         [1,1,1,1,1,1,1,1],
         [1,1,1,1,-1,-1,-1,-1],
         [1,1,1,0,-3,-3,-3,6],
         [1,1,0,0,-5,-5,4,4],
         [1,0,0,0,-7,2,2,2]]
resB2 = [[1,1],
         [1,0]]
resB3 = [[1,1,0],
         [2,0,0],
         [1,0,1]]
resB4 = [[1,1,0,0],
         [2,0,0,0],
         [2,0,1,1],
         [1,0,1,0]]
resB5 = [[1,1,0,0,0],
         [2,0,0,0,0],
         [2,0,1,1,0],
         [2,0,2,0,0],
         [1,0,1,0,1]]
resB6 = [[1,1,0,0,0,0],
         [2,0,0,0,0,0],
         [2,0,1,1,0,0],
         [2,0,2,0,0,0],
         [2,0,2,0,1,1],
         [1,0,1,0,1,0]]
resB7 = [[1,1,0,0,0,0,0],
         [2,0,0,0,0,0,0],
         [2,0,1,1,0,0,0],
         [2,0,2,0,0,0,0],
         [2,0,2,0,1,1,0],
         [2,0,2,0,2,0,0],
         [1,0,1,0,1,0,1]]
resB8 = [[1,1,0,0,0,0,0,0],
         [2,0,0,0,0,0,0,0],
         [2,0,1,1,0,0,0,0],
         [2,0,2,0,0,0,0,0],
         [2,0,2,0,1,1,0,0],
         [2,0,2,0,2,0,0,0],
         [2,0,2,0,2,0,1,1],
         [1,0,1,0,1,0,1,0]]
resC3 = [[1,0,0],
         [1,1,0],
         [1,1,1]]
resC4 = [[1,0,0,0],
         [1,1,0,0],
         [1,1,1,0],
         [1,1,1,1]]
resC5 = [[1,0,0,0,0],
         [1,1,0,0,0],
         [1,1,1,0,0],
         [1,1,1,1,0],
         [1,1,1,1,1]]
resC6 = [[1,0,0,0,0,0],
         [1,1,0,0,0,0],
         [1,1,1,0,0,0],
         [1,1,1,1,0,0],
         [1,1,1,1,1,0],
         [1,1,1,1,1,1]]
resC7 = [[1,0,0,0,0,0,0],
         [1,1,0,0,0,0,0],
         [1,1,1,0,0,0,0],
         [1,1,1,1,0,0,0],
         [1,1,1,1,1,0,0],
         [1,1,1,1,1,1,0],
         [1,1,1,1,1,1,1]]
resC8 = [[1,0,0,0,0,0,0,0],
         [1,1,0,0,0,0,0,0],
         [1,1,1,0,0,0,0,0],
         [1,1,1,1,0,0,0,0],
         [1,1,1,1,1,0,0,0],
         [1,1,1,1,1,1,0,0],
         [1,1,1,1,1,1,1,0],
         [1,1,1,1,1,1,1,1]]
resD4 = [[1,1,0,0],
         [2,0,0,0],
         [1,0,1,0],
         [1,0,0,1]]
resD5 = [[1,1,0,0,0],
         [2,0,0,0,0],
         [2,0,1,1,0],
         [1,0,1,0,1],
         [1,0,1,0,-1]]
resD6 = [[1,1,0,0,0,0],
         [2,0,0,0,0,0],
         [2,0,1,1,0,0],
         [2,0,2,0,0,0],
         [1,0,1,0,1,0],
         [1,0,1,0,0,1]]
resD7 = [[1,1,0,0,0,0,0],
         [2,0,0,0,0,0,0],
         [2,0,1,1,0,0,0],
         [2,0,2,0,0,0,0],
         [2,0,2,0,1,1,0],
         [1,0,1,0,1,0,1],
         [1,0,1,0,1,0,-1]]
resD8 = [[1,1,0,0,0,0,0,0],
         [2,0,0,0,0,0,0,0],
         [2,0,1,1,0,0,0,0],
         [2,0,2,0,0,0,0,0],
         [2,0,2,0,1,1,0,0],
         [2,0,2,0,2,0,0,0],
         [1,0,1,0,1,0,1,0],
         [1,0,1,0,1,0,0,1]]
resE6 = [[0,0,1,1,1,0],
         [0,0,2,0,0,0],
         [1,0,2,1,0,1],
         [1,1,3,1,0,0],
         [1,0,2,1,0,-1],
         [0,0,1,1,-1,0]]
resE7 = [[0,0,0,0,0,0,2],
         [1,0,1,0,1,0,2],
         [0,1,1,0,1,0,3],
         [0,0,2,0,2,0,4],
         [0,0,1,1,2,0,3],
         [0,0,0,0,2,0,2],
         [0,0,0,0,1,1,1]]
resE8 = [[0,0,0,0,0,0,2,2],
         [1,0,1,0,1,0,3,2],
         [0,1,1,0,1,0,4,3],
         [0,0,2,0,2,0,6,4],
         [0,0,1,1,2,0,5,3],
         [0,0,0,0,2,0,4,2],
         [0,0,0,0,1,1,3,1],
         [0,0,0,0,0,0,2,0]]
resF4 = [[2,0,0,0],
         [3,1,1,1],
         [2,1,1,0],
         [1,1,0,0]]
resG2 = [[1,1],
         [0,2]]
\end{verbatim}
\end{multicols}
\begin{verbatim}
##########
# Definition of functions computing the w0-signature (one function per Lie algebra)
##########
w0A1(vec v) = count_signature(branch(v,A1,resA1,A1),1)
w0A2(vec v) = count_signature(branch(v,A1T1,resA2,A2),1)
w0A3(vec v) = count_signature(branch(v,A1A1T1,resA3,A3),2)
w0A4(vec v) = count_signature(branch(v,A1A1T2,resA4,A4),2)
w0A5(vec v) = count_signature(branch(v,A1A1A1T2,resA5,A5),3)
w0A6(vec v) = count_signature(branch(v,A1A1A1T3,resA6,A6),3)
w0A7(vec v) = count_signature(branch(v,A1A1A1A1T3,resA7,A7),4)
w0A8(vec v) = count_signature(branch(v,A1A1A1A1T4,resA8,A8),4)
w0B2(vec v) = count_signature(branch(v,A1A1,resB2,B2),2)
w0B3(vec v) = count_signature(branch(v,A1A1A1,resB3,B3),3)
w0B4(vec v) = count_signature(branch(v,A1A1A1A1,resB4,B4),4)
w0B5(vec v) = count_signature(branch(v,A1A1A1A1A1,resB5,B5),5)
w0B6(vec v) = count_signature(branch(v,A1A1A1A1A1A1,resB6,B6),6)
w0B7(vec v) = count_signature(branch(v,A1A1A1A1A1A1A1,resB7,B7),7)
w0B8(vec v) = count_signature(branch(v,A1A1A1A1A1A1A1A1,resB8,B8),8)
w0C3(vec v) = count_signature(branch(v,A1A1A1,resC3,C3),3)
w0C4(vec v) = count_signature(branch(v,A1A1A1A1,resC4,C4),4)
w0C5(vec v) = count_signature(branch(v,A1A1A1A1A1,resC5,C5),5)
w0C6(vec v) = count_signature(branch(v,A1A1A1A1A1A1,resC6,C6),6)
w0C7(vec v) = count_signature(branch(v,A1A1A1A1A1A1A1,resC7,C7),7)
w0C8(vec v) = count_signature(branch(v,A1A1A1A1A1A1A1A1,resC8,C8),8)
w0D4(vec v) = count_signature(branch(v,A1A1A1A1,resD4,D4),4)
w0D5(vec v) = count_signature(branch(v,A1A1A1A1T1,resD5,D5),4)
w0D6(vec v) = count_signature(branch(v,A1A1A1A1A1A1,resD6,D6),6)
w0D7(vec v) = count_signature(branch(v,A1A1A1A1A1A1T1,resD7,D7),6)
w0D8(vec v) = count_signature(branch(v,A1A1A1A1A1A1A1A1,resD8,D8),8)
w0E6(vec v) = count_signature(branch(v,A1A1A1A1T2,resE6,E6),4)
w0E7(vec v) = count_signature(branch(v,A1A1A1A1A1A1A1,resE7,E7),7)
w0E8(vec v) = count_signature(branch(v,A1A1A1A1A1A1A1A1,resE8,E8),8)
w0F4(vec v) = count_signature(branch(v,A1A1A1A1,resF4,F4),4)
w0G2(vec v) = count_signature(branch(v,A1A1,resG2,G2),2)
\end{verbatim}

\section*{Acknowledgements}

We would like to thank Ernest Vinberg, who suggested the crucial idea of using Theorem~\ref{thm:Vinberg} to prove Lemma~\ref{lem:ideal}; as well as Jeffrey Adams and Yifan Wang for some interesting discussions. The second author of the present paper was supported by the National Science Foundation grant DMS-1709952.

\end{document}